\providecommand{\U}[1]{\protect\rule{.1in}{.1in}}
\newtheorem{theorem}{Theorem}
\newtheorem{corollary}[theorem]{Corollary}
\newtheorem{definition}[theorem]{Definition}
\newtheorem{example}[theorem]{Example}
\newtheorem{lemma}[theorem]{Lemma}
\newtheorem{remark}[theorem]{Remark}
\newcommand\supp{\mathop{\rm supp}}
\begin{document}

\title[Weighted estimates for generalized Riesz potentials]{Weighted estimates for generalized Riesz potentials}
\author{Pablo Rocha}
\address{Departamento de Matem\'atica, Universidad Nacional del Sur, Bah\'{\i}a Blanca, 8000 Buenos Aires, Argentina.}
\email{pablo.rocha@uns.edu.ar}
\thanks{\textbf{Key words and phrases}: Weighted Hardy Spaces, Atomic Decomposition, Fractional Operators.}
\thanks{\textbf{2.010 Math. Subject Classification}: 42B25, 42B30.}

\begin{abstract}
For $0 \leq \alpha < n$, $0 < p \leq 1$ and $\frac{1}{q} = \frac{1}{p} - \frac{\alpha}{n}$, we obtain the boundedness from $H^{p}_{w}(\mathbb{R}^{n})$ into $L^{q}_{w^{q/p}}(\mathbb{R}^{n})$ of certain generalized Riesz potentials for certain weights $w$. This result is achieved via the infinite atomic decomposition developed in \cite{Rocha}.
\end{abstract}

\maketitle

\section{Introduction}

Given $0 \leq \alpha < n$ and $m \in \mathbb{N} \cap \left(1 - \frac{\alpha}{n}, +\infty \right)$, we consider the operator $T_{\alpha, m}$ defined by
\begin{equation} \label{Talfa}
T_{\alpha, m}f(x) = \int_{\mathbb{R}^{n}} |x - A_1 y|^{-\alpha_1} \cdot \cdot \cdot |x - A_m y|^{-\alpha_m} f(y) \, dy, \,\,\,\,\,\,\,\,\, 
(x \in \mathbb{R}^{n}),
\end{equation}
where the $\alpha_j$'s are positive constants such that $\alpha_1 + \cdot \cdot \cdot + \alpha_m = n - \alpha$ and the $A_j$'s are 
$n \times n$ invertible matrices. For the case $\alpha = 0$, we require that the matrices $A_j$ satisfy the following additional condition: 
$A_i - A_j$ are invertible for $i \neq j$, $1 \leq i, j \leq m$. 

The germ of this operator appears in \cite{Ricci}, there F. Ricci and P. Sj\"ogren obtained the boundedness on $L^{p}(\mathbb{H}_1)$, $1 < p \leq +\infty$, for a family of maximal operators on the three dimensional Heisenberg group
$\mathbb{H}_1$. To get this result, they studied the $L^{2}(\mathbb{R})$ boundedness of the operator
\[
Tf(x) = \int_{\mathbb{R}} |x-y|^{\alpha-1}|(\beta-1)x- \beta y|^{-\alpha} f(y) \, dy,
\] 
for $\beta \neq 0, 1$ and $0 < \alpha < 1$. A generalization of this operator on $\mathbb{R}^{n}$ was studied by T. Godoy and M. Urciuolo in 
\cite{Godoy}.

If $0 < \alpha < n$ and $m \geq 1$, then the operator $T_{\alpha, m}$ has the same behavior that the Riesz potential on 
$L^{p}(\mathbb{R}^{n})$. Indeed
\begin{equation} \label{Talfa m}
|T_{\alpha, m}f(x) | \leq C \sum_{j=1}^{m} \int_{\mathbb{R}^{n}} |A_{j}^{-1}x - y |^{\alpha - n} |f(y)| dy = 
C \sum_{j=1}^{m} I_{\alpha}(|f|)(A_{j}^{-1}x),
\end{equation}
for all $x \in \mathbb{R}^{n}$, this pointwise inequality implies that $T_{\alpha, m}$ is a bounded operator from 
$L^{p}(\mathbb{R}^{n})$ into $L^{q}(\mathbb{R}^{n})$ for $1 < p < \frac{n}{\alpha}$ and $\frac{1}{q} = \frac{1}{p} - \frac{\alpha}{n}$, and it is of type weak $(1, n/n - \alpha)$. We observe that if $A_j = id$ for all $j=1, ..., m$, then $T_{\alpha, m}$ is the Riesz potential 
$I_{\alpha}$. Thus, for $0 < \alpha < n$ and $m \geq 1$, the operator $T_{\alpha, m}$ is a kind of generalization of the Riesz potential. 

It is well known that the Riesz potential $I_{\alpha}$ is bounded from $H^{p}(\mathbb{R}^{n})$ into $H^{q}(\mathbb{R}^{n})$ for $0 < p \leq 1$ and $\frac{1}{q} = \frac{1}{p} - \frac{\alpha}{n}$ (see \cite{Taibleson}, \cite{Krantz}). In \cite{R-U2}, the author jointly with M. Urciuolo proved the $H^{p}(\mathbb{R}^{n}) - L^{q}(\mathbb{R}^{n})$ boundedness of the operator $T_{\alpha, m}$ and we also showed that the $H^{p}(\mathbb{R}) - H^{q}(\mathbb{R})$ boundedness does not hold for 
$0 < p \leq \frac{1}{1+\alpha}$, $\frac{1}{q} = \frac{1}{p} - \alpha$ and $T_{\alpha, m}$ with $0 \leq \alpha <1$, $m=2$, $A_1 = 1$, and $A_2 =-1$. This is a significant difference with respect to the case $0 < \alpha < 1$, $n=m=1$, and $A_1=1$.

For the case $\alpha =0$ and $m \geq 2$, the operator $T_{0, m}$ was studied under the assumption that $A_i - A_j$ are invertible if $i \neq j$. 

The behavior of this class of operators and generalizations of them on the spaces of functions 
$L^{p}(\mathbb{R}^{n})$, $L^{p}_{w}(\mathbb{R}^{n})$, $H^{p}(\mathbb{R}^{n})$ was studied in \cite{Urciuolo}, \cite{Riveros}, \cite{R-U}, \cite{R-U2}, and \cite{ro2}. Their behavior on variable Hardy spaces $H^{p(\cdot)}(\mathbb{R}^{n})$ and on variable Hardy-Morrey spaces 
$\mathcal{HM}_{p(\cdot), u}(\mathbb{R}^{n})$ can be found in \cite{rochur}, \cite{R-U3}, \cite{ro3} and \cite{Tan}.

To obtain the boundedness of operators, like singular integrals or fractional type operators, in weighted Hardy spaces $H^{p}_{w}(\mathbb{R}^{n})$, one can appeal to the atomic or molecular characterization of $H^{p}_{w}(\mathbb{R}^{n})$, which means that a distribution in $H^{p}_w$ can be represented as a sum of atoms or molecules; see \cite{cuerva}, \cite{tor}, \cite{li}, \cite{lee}, and \cite{Rocha}. Then the boundedness of linear operators in $H^{p}_w$ can be deduced, in principle, from their behavior on atoms. However, it must be mentioned that in the classical Hardy spaces, i.e.: $w \equiv 1$, M. Bownik in \cite{bownik} has shown, by means of an example of Y. Meyer, that it not suffice to check that an operator from a Hardy spaces $H^{p}$, $0 < p \leq 1$, into some quasi Banach space $X$, maps atoms into bounded elements of $X$ to establish that this operator extends to  a bounded operator on $H^{p}$. This example is, in a certain sense, pathological. Now, if our operator $T$ under study is bounded on $L^{s}$ for some $1 < s < +\infty$, then the uniform boundedness of $T$ on atoms in $L^{p}$ norm (or $H^{p}$ norm) implies that the operator extends to a bounded operator from $H^{p}$ into $L^{p}$ 
(or on $H^{p}$), this follows from the fact that one always can take an atomic decomposition which converges in the norm of $L^{s}$, see \cite{dachun} and \cite{ro}. In \cite{Rocha} the author proved that if $w \in \mathcal{A}_{\infty}$, $0 < p \leq 1$ and 
$f \in \widehat{\mathcal{D}}_{0}$, then there exist a sequence of $w - (p, p_0, d)$ atoms $\{ a_j \}$ and a sequence of scalars $\{ \lambda_j \}$ with $\sum_{j} |\lambda_j |^{p} \leq c \| f \|_{H^{p}_{w}}^{p}$ such that $f = \sum_{j} \lambda_j a_j$, where the convergence is in $L^{s}(\mathbb{R}^{n})$ for each $1 < s <\infty$ (see Theorem \ref{decomp atomic} below). This result allows to avoid any problem that could arise with respect to establish the boundedness of classical operators on $H^{p}_{w}(\mathbb{R}^{n})$ (see \cite{Rocha}, Theorems 2.10 and 3.4). 

In this work, for certain weights $w$ and by means of the atomic decomposition developed in \cite{Rocha}, we will prove $H^{p}_{w}(\mathbb{R}^{n}) - L^{q}_{w^{q/p}}(\mathbb{R}^{n})$ estimates for the operator $T_{\alpha, m}$ given by (\ref{Talfa}), where $0 < p \leq 1$, $\frac{1}{q} = \frac{1}{p} - \frac{\alpha}{n}$.

To get our main results we will also consider the following condition for the weights: there exists $C > 0$ such that
\begin{eqnarray} \label{cond A}
w(A_j x) \leq C w(x), \,\,\,\, \text{a.e.} \, x \in \mathbb{R}^{n}, \, j=1, ..., m.
\end{eqnarray}

Next we give two nontrivial examples of weights that satisfy the condition (\ref{cond A}). Observe that also power weights satisfy this condition.
\begin{example} Let $w(x) = \left\{\begin{array}{cc}
                                                     \log(\frac{1}{|x|}) & |x| < \frac{1}{e}  \\
                                                     1 & |x| \geq \frac{1}{e}
                                                   \end{array}
                                                 \right.$. Then $w \in \mathcal{A}_1$ and satisfies (\ref{cond A}).
\end{example}
\begin{example}
The nonnegative function $w(x)=|x|^{a}$, $x \in \mathbb{R}^{n}\setminus\{0\}$, is a weight in the Muckenhoupt class $A_{p}$, 
$1 < p < \infty$, if and only if $-n < a < n (p-1)$ $($see p. 506 in \cite{grafakos}$)$. Such a weight satisfies (\ref{cond A}).
\end{example}

In Section 2 we present the basics of weighted theory and briefly recall the atomic decomposition developed in \cite{Rocha}. In Section 3 appears our main results. We study the cases $\alpha = 0$ and $0 < \alpha < n$ separately. More precisely, we obtain:

\

{\bf Case $\alpha = 0$.} \textit{If $w \in \mathcal{A}_{\infty}$ and satisfies (\ref{cond A}), then $T_{0, m}$ can be extended to an $H^{p}_{w}(\mathbb{R}^{n})$-$L^{p}_{w}(\mathbb{R}^{n})$ bounded operator for each $0 < p \leq 1$} (see Theorem \ref{Thm 1} below).

\

{\bf Case $0 < \alpha < n$.} \textit{If $w^{\frac{n}{(n-\alpha)s}} \in \mathcal{A}_{1}$ with $0 < s < 1$, 
$\frac{r_w}{r_w - 1} < \frac{n}{\alpha}$ and $w$ satisfies (\ref{cond A}), then $T_{\alpha, m}$ can be extended to an $H^{p}_{w^{p}}(\mathbb{R}^{n})$-$L^{q}_{w^{q}}(\mathbb{R}^{n})$ bounded operator for each $s \leq p \leq 1$ and 
$\frac{1}{q} = \frac{1}{p} - \frac{\alpha}{n}$} (see Theorem \ref{Ta} below).

\

\textbf{Notation:} The symbol $A\lesssim B$ stands for the inequality $A\leq cB$ for some constant $c$. We denote by $B(x_0, r)$ the ball centered at $x_0 \in \mathbb{R}^{n}$ of radius $r$. For a measurable subset $E \subset \mathbb{R}^{n}$ we denote $|E|$ and $\chi_E$ the Lebesgue measure of $E$ and the characteristic function of $E$ respectively. Given a real number $s \geq 0$, we write $\lfloor s \rfloor$ for the integer part of $s$. As usual we denote with $\mathcal{S}(\mathbb{R}^{n})$ the space of smooth and rapidly decreasing functions, with $\mathcal{S}'(\mathbb{R}^{n})$  the dual space. If $\beta$ is the multiindex $\beta=(\beta_1, ..., \beta_n)$,
then $|\beta| = \beta_1 + ... + \beta_n$. 

Throughout this paper, $C$ will denote a positive constant, not necessarily the same at each occurrence.

\section{Preliminaries} 

A weight is a non-negative locally integrable function on $\mathbb{R}^{n}$ that takes values in $(0, \infty)$ almost everywhere, i.e. : the weights are allowed
to be zero or infinity only on a set of Lebesgue measure zero.

Given a weight $w$ and $0 < p < \infty$, we denote by $L^{p}_{w}(\mathbb{R}^{n})$ the spaces of all functions $f$ satisfying $\| f \|_{L^{p}_{w}}^{p} := \int_{\mathbb{R}^{n}} |f(x)|^{p} w(x) dx < \infty$ . When $p=\infty$, we have that $L^{\infty}_{w}(\mathbb{R}^{n}) =L^{\infty}(\mathbb{R}^{n})$ with $\| f \|_{L^{\infty}_{w}} = \| f \|_{L^{\infty}}$. If $E$ is a measurable set, we use the notation $w(E) = \int_{E} w(x) dx$.

Let $f$ be a locally integrable function on $\mathbb{R}^{n}$. The function
$$M(f)(x) = \sup_{B \ni x} \frac{1}{|B|} \int_{B} |f(y)| dy,$$
where the supremum is taken over all balls $B$ containing $x$, is called the uncentered Hardy-Littlewood maximal function of $f$.

We say that a weight $w \in \mathcal{A}_1$ if there exists $C >  0$ such that
\begin{equation*}
M(w)(x) \leq C w(x), \,\,\,\,\, a.e. \, x \in \mathbb{R}^{n}, 
\end{equation*}
the best possible constant is denoted by $[w]_{\mathcal{A}_1}$. Equivalently, a weight $w \in \mathcal{A}_1$ if there exists $C >  0$ such that for every ball $B$
\begin{equation}
\frac{1}{|B|} \int_{B} w(x) dx \leq C \, ess\inf_{x \in B} w(x). \label{A1condequiv}
\end{equation}
\begin{remark} If $w \in \mathcal{A}_1$ and $0 < r < 1$, then by H\"older inequality we have that $w^{r} \in \mathcal{A}_1$.
\end{remark}
For $1 < p < \infty$, we say that a weight $w \in \mathcal{A}_p$ if there exists $C> 0$ such that for every ball $B$
$$\left( \frac{1}{|B|} \int_{B} w(x) dx \right) \left( \frac{1}{|B|} \int_{B} [w(x)]^{-\frac{1}{p-1}} dx \right)^{p-1} \leq C.$$
It is well known that $\mathcal{A}_{p_1} \subset \mathcal{A}_{p_2}$ for all $1 \leq p_1 < p_2 < \infty$. Also, if $w \in \mathcal{A}_{p}$ with $1 < p < \infty$, then there exists $1 < q <p$ such that $w \in \mathcal{A}_{q}$. We denote by $\widetilde{q}_{w} = \inf \{ q>1 : w \in \mathcal{A}_q \}$ \textit{the critical index of $w$.}

\begin{lemma} \label{prop doblante} $($Proposition 7.1.5 in \cite{grafakos}$)$ If $w \in \mathcal{A}_{p}$ for some $1 \leq p < \infty$, then the measure $w(x) dx$ is doubling: precisely, for all $\lambda >1$ an all ball $B$ we have
\[
w(\lambda B) \leq \lambda^{np} [w]_{\mathcal{A}_p} w(B),
\]
where $\lambda B$ denotes the ball with the same center as $B$ and radius $\lambda$ times the radius of $B$.
\end{lemma}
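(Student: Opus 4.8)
The plan is to deduce the doubling inequality from the following comparison estimate for $\mathcal{A}_p$ weights: for every ball $Q$ and every measurable subset $S \subseteq Q$,
\[
\frac{w(S)}{w(Q)} \geq \frac{1}{[w]_{\mathcal{A}_p}} \left( \frac{|S|}{|Q|} \right)^{p}.
\]
Granting this, the lemma is immediate. Indeed, applying the estimate with $Q = \lambda B$ and $S = B$, and using $|B|/|\lambda B| = \lambda^{-n}$, we get $w(B)/w(\lambda B) \geq \lambda^{-np}/[w]_{\mathcal{A}_p}$, which rearranges to the asserted bound $w(\lambda B) \leq \lambda^{np} [w]_{\mathcal{A}_p} w(B)$. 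So the whole content lies in the comparison estimate.

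To establish it in the range $1 < p < \infty$, I would begin from the identity $|S| = \int_S w^{1/p}\, w^{-1/p}\, dx$ and apply H\"older's inequality with conjugate exponents $p$ and $p' = p/(p-1)$, obtaining
\[
|S| \leq \left( \int_S w \, dx \right)^{1/p} \left( \int_S w^{-\frac{1}{p-1}} \, dx \right)^{(p-1)/p} = w(S)^{1/p} \left( \int_S w^{-\frac{1}{p-1}} \, dx \right)^{(p-1)/p}.
\]
Because $S \subseteq Q$, the last integral only increases if taken over $Q$, and the $\mathcal{A}_p$ condition on $Q$ may be rewritten as $\int_Q w^{-\frac{1}{p-1}} dx \leq [w]_{\mathcal{A}_p}^{1/(p-1)}\, |Q|^{p/(p-1)}\, w(Q)^{-1/(p-1)}$. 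Substituting and simplifying, the powers of $|Q|$ collapse to $|Q|^{1}$ and one arrives at $|S| \leq [w]_{\mathcal{A}_p}^{1/p}\, |Q|\, \big(w(S)/w(Q)\big)^{1/p}$; raising both sides to the $p$-th power yields the comparison estimate.

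For the endpoint $p = 1$ the argument is shorter and I would carry it out separately, using the equivalent formulation (\ref{A1condequiv}): from $w(Q)/|Q| \leq [w]_{\mathcal{A}_1}\, ess\inf_{Q} w$ and $S \subseteq Q$ one gets
\[
w(S) \geq |S|\; ess\inf_{Q} w \geq \frac{|S|}{|Q|}\, \frac{w(Q)}{[w]_{\mathcal{A}_1}},
\]
which is exactly the comparison with $p=1$. I do not anticipate any serious obstacle: the only points demanding care are the bookkeeping of the H\"older exponents (checking that the powers of $|Q|$ cancel to give $|Q|^{1}$) and the explicit rearrangement of the $\mathcal{A}_p$ constant, together with the elementary but essential use of $S \subseteq Q$ to pass from $\int_S w^{-1/(p-1)}$ to $\int_Q w^{-1/(p-1)}$, where alone the $\mathcal{A}_p$ condition can be invoked.
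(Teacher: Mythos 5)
Your proof is correct. The paper itself gives no argument for this lemma (it simply cites Proposition 7.1.5 in Grafakos), and your proposal reproduces exactly the standard proof found there: the comparison estimate $w(S)/w(Q) \geq [w]_{\mathcal{A}_p}^{-1}(|S|/|Q|)^{p}$ for $S \subseteq Q$, obtained via H\"older's inequality against the $\mathcal{A}_p$ condition (with the separate, elementary $\operatorname{ess\,inf}$ argument at $p=1$), followed by the specialization $S=B$, $Q=\lambda B$. The exponent bookkeeping you flag does work out as claimed: the powers of $|Q|$ collapse to $|Q|^{p/(p-1)\cdot(p-1)/p}=|Q|$, and raising to the $p$-th power gives the stated constant $\lambda^{np}[w]_{\mathcal{A}_p}$ with no loss.
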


\begin{lemma} \label{prop A} If $w \in \mathcal{A}_{\infty}$ and $A$ is a $n \times n$ invertible matrix such that $w(Ax) \lesssim w(x)$ a.e. $x \in \mathbb{R}^{n}$, then for each $M>0$ there exists a constant $C > 0$ such that for every ball $B(x_0, r)$ holds
\[
w(B(Ax_0, 2Mr)) \leq C w(B(x_0, r)).
\]
\end{lemma}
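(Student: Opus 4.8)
The plan is to combine the hypothesis $w(Ax) \lesssim w(x)$, transported from points to sets via a linear change of variables, with the doubling property of $\mathcal{A}_{\infty}$ weights supplied by Lemma \ref{prop doblante}. The two ingredients are elementary on their own; the content of the lemma is that chaining them together lets the ball $B(Ax_0, 2Mr)$ be compared to $B(x_0, r)$ uniformly in $x_0$ and $r$.

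First I would record the set-theoretic form of the pointwise hypothesis. Writing $c_0$ for the constant with $w(Ax) \leq c_0 w(x)$ a.e., the change of variables $y = Ax$ gives, for every measurable $E \subseteq \mathbb{R}^{n}$,
\[
w(AE) = \int_{AE} w(y)\,dy = |\det A| \int_{E} w(Ax)\,dx \leq c_0 |\det A|\, w(E).
\]
Next comes a purely geometric inclusion. Since $w \in \mathcal{A}_{\infty}$, we have $w \in \mathcal{A}_{p}$ for some $1 \leq p < \infty$. Set $K = \max\{2,\, 2M \|A^{-1}\|\}$, where $\|A^{-1}\|$ denotes the operator norm of $A^{-1}$. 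If $y \in B(Ax_0, 2Mr)$, then $x := A^{-1}y$ satisfies $|x - x_0| = |A^{-1}(y - Ax_0)| \leq \|A^{-1}\|\, |y - Ax_0| < Kr$, so $y = Ax \in A\cdot B(x_0, Kr)$; that is, $B(Ax_0, 2Mr) \subseteq A\cdot B(x_0, Kr)$.

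Finally I would string the two estimates together with doubling. Combining the inclusion, the change-of-variables bound applied to $E = B(x_0, Kr)$, and Lemma \ref{prop doblante} (applicable since $K > 1$ and $B(x_0, Kr)$ is the $K$-dilate of $B(x_0, r)$ about its center),
\[
w(B(Ax_0, 2Mr)) \leq w(A\cdot B(x_0, Kr)) \leq c_0 |\det A|\, w(B(x_0, Kr)) \leq c_0 |\det A|\, K^{np} [w]_{\mathcal{A}_p}\, w(B(x_0, r)),
\]
which is the claim with $C = c_0 |\det A|\, K^{np} [w]_{\mathcal{A}_p}$, a constant depending on $M$ and $A$ but not on $x_0$ or $r$.

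There is no serious obstacle here. The only points requiring care are keeping the roles of $A$ and $A^{-1}$ straight, so that the enlargement factor $K$ absorbing both $M$ and the anisotropic distortion of $A$ is correctly expressed through $\|A^{-1}\|$, and confirming that Lemma \ref{prop doblante} is invoked with a dilation factor $K > 1$, which is why $K$ is taken to be at least $2$.
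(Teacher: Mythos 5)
Your proof is correct and follows exactly the route the paper indicates: the paper's own proof is the one-line remark ``applying a change of variable and Lemma \ref{prop doblante}, the lemma follows,'' and your argument is precisely that chain --- the change-of-variables bound $w(AE) \leq c_0 |\det A|\, w(E)$, the inclusion $B(Ax_0, 2Mr) \subseteq A\cdot B(x_0, Kr)$, and the doubling estimate --- with the details the paper leaves implicit carefully filled in.
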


\begin{proof} Applying a change of variable and Lemma \ref{prop doblante}, the lemma follows.
\end{proof}

\begin{theorem} $($\textit{Theorem 9 in \cite{Muck}}$)$ \label{cond Ap} Let $1 < p < \infty$. Then
\[
\int_{\mathbb{R}^{n}} [Mf(x)]^{p} w(x) dx \leq C_{w, p, n} \int_{\mathbb{R}^{n}} |f(x)|^{p} w(x) dx,
\]
for all $f \in L^{p}_{w}(\mathbb{R}^{n})$ if and only if $w \in \mathcal{A}_p$. 
\end{theorem}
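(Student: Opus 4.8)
The plan is to prove both implications of this classical Muckenhoupt characterization. I would begin with the easier necessity direction: assuming the weighted maximal inequality holds, I would recover the $\mathcal{A}_p$ condition by testing it on well-chosen functions. Fixing a ball $B$ and a nonnegative $f$ supported on $B$, the elementary lower bound $Mf(x) \geq |B|^{-1}\int_B f$ for $x \in B$ gives $\left(|B|^{-1}\int_B f\right)^p w(B) \leq C\int_B f^p\, w$. Choosing $f = w^{-1/(p-1)}\chi_B$ (with a truncation to ensure integrability, then passing to the limit) and simplifying the resulting inequality yields exactly
\[
\frac{w(B)}{|B|}\left(\frac{1}{|B|}\int_B w^{-1/(p-1)}\right)^{p-1} \leq C,
\]
that is, $w \in \mathcal{A}_p$.

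For the harder sufficiency direction, my first step would be to establish the weak-type $(p,p)$ inequality $w(\{Mf > \lambda\}) \leq C\lambda^{-p}\int_{\mathbb{R}^n}|f|^p\, w$. Writing $E_\lambda = \{Mf > \lambda\}$, each point of $E_\lambda$ lies in a ball on which the average of $|f|$ exceeds $\lambda$, so a Vitali-type covering lemma extracts a disjoint subfamily $\{B_j\}$ with $E_\lambda \subset \bigcup_j 5B_j$. On each $B_j$, Hölder's inequality with exponents $p, p'$ applied after the weight splitting $|f| = (|f|\,w^{1/p})\,w^{-1/p}$ gives $\lambda^p|B_j|^p < \left(\int_{B_j}|f|^p\, w\right)\left(\int_{B_j}w^{-1/(p-1)}\right)^{p-1}$; invoking the $\mathcal{A}_p$ condition to bound the second factor by $[w]_{\mathcal{A}_p}|B_j|^p/w(B_j)$ yields $w(B_j) \leq [w]_{\mathcal{A}_p}\lambda^{-p}\int_{B_j}|f|^p\, w$. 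Summing over the disjoint $B_j$ and using the doubling property (Lemma \ref{prop doblante}) to pass from $w(5B_j)$ back to $w(B_j)$ completes the weak-type estimate.

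The second step upgrades weak type to strong type. Here I would exploit the self-improvement of $\mathcal{A}_p$ recorded in the preliminaries: since $w \in \mathcal{A}_p$, there is $1 < q < p$ with $w \in \mathcal{A}_q$. The weak-type argument above then shows that $M$ is of weak type $(q,q)$ with respect to the measure $w\,dx$, while the trivial bound $\|Mf\|_{L^\infty} \leq \|f\|_{L^\infty}$ shows $M$ is of strong, hence weak, type $(\infty,\infty)$ for the same measure. Applying the Marcinkiewicz interpolation theorem on the measure space $(\mathbb{R}^n, w\,dx)$ between the exponents $q$ and $\infty$ produces the strong-type $(p,p)$ bound at the intermediate exponent $p$, which is precisely the desired inequality.

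The main obstacle I anticipate lies in the sufficiency direction, and within it the careful bookkeeping of the covering argument together with the weight-splitting Hölder estimate used in the weak-type bound. The interpolation step is routine once weak $(q,q)$ is in hand, and crucially it rests on the openness of the $\mathcal{A}_p$ scale, whose genuine depth (the reverse Hölder inequality for $\mathcal{A}_p$ weights) is fortunately already available from the preliminaries.
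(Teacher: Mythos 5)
This statement is not proved in the paper at all: it is quoted verbatim as Theorem 9 of Muckenhoupt's 1972 paper \cite{Muck}, so there is no internal proof to compare against. Judged on its own merits, your outline is the standard modern proof (the one found, e.g., in Grafakos or Garc\'{\i}a-Cuerva--Rubio de Francia, going back to Coifman--Fefferman) and it is essentially correct: necessity by testing on truncations of $w^{-1/(p-1)}\chi_B$, sufficiency by the weak-type $(p,p)$ estimate via a covering argument plus the H\"older splitting $|f| = (|f|w^{1/p})w^{-1/p}$, and then the upgrade to strong type by the openness of the $\mathcal{A}_p$ scale together with Marcinkiewicz interpolation between weak $(q,q)$ and $(\infty,\infty)$ on the measure space $(\mathbb{R}^n, w\,dx)$. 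Two technical points deserve more care than your sketch gives them. First, in the covering step the balls witnessing $Mf(x)>\lambda$ need not have uniformly bounded radii, so the Vitali-type selection should be run on compact subsets of $\{Mf>\lambda\}$ (finitely many balls, finite selection) and the estimate then follows by inner regularity; as stated, ``extracts a disjoint subfamily with $E_\lambda \subset \bigcup_j 5B_j$'' is not literally available for an arbitrary family. Second, your closing remark is right that there is no circularity: the self-improvement $w \in \mathcal{A}_p \Rightarrow w \in \mathcal{A}_q$ for some $q<p$ rests on the reverse H\"older inequality, which is proved by a Calder\'on--Zygmund argument independent of the weighted maximal theorem, so invoking it here is legitimate. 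With those caveats your plan is a complete and correct route to the theorem, though it is worth noting it is not Muckenhoupt's original argument, which predates the reverse-H\"older/interpolation machinery.
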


Given $1 < p \leq q < \infty$, we say that a weight $w \in \mathcal{A}_{p,q}$ if there exists $C> 0$ such that for every ball $B$
$$\left( \frac{1}{|B|} \int_{B} [w(x)]^{q} dx \right)^{1/q} \left( \frac{1}{|B|} \int_{B} [w(x)]^{-p'} dx \right)^{1/p'} \leq C < \infty.$$
For $p=1$, we say that a weight $w \in \mathcal{A}_{1,q}$ if there exists $C> 0$ such that for every ball $B$
$$\left( \frac{1}{|B|} \int_{B} [w(x)]^{q} dx \right)^{1/q} \leq C \, ess\inf_{x \in B} w(x).$$
When $p=q$, this definition is equivalent to $w^{p} \in \mathcal{A}_{p}$.
\begin{remark} \label{w_1_q}
From the inequality in (\ref{A1condequiv}) it follows that if a weight $w \in \mathcal{A}_1$, then $0 < ess\inf_{x \in B} w(x) < \infty$ for each ball $B$. Thus $w \in \mathcal{A}_1$ implies that $w^{\frac{1}{q}} \in \mathcal{A}_{p,q}$, for each $1 \leq p \leq q < \infty$.
\end{remark}
Given $0 < \alpha < n$, we define the fractional maximal operator $M_{\alpha}$ by
\[
M_{\alpha}f(x) = \sup_{B \ni x} \frac{1}{|B|^{1 - \frac{\alpha}{n}}} \int_{B} |f(y) | dy,
\]
where $f$ is a locally integrable function and the supremum is taken over all balls $B$ containing $x$

The fractional maximal operators satisfies the following weighted inequality.

\begin{theorem}$($\textit{Theorem 3 in \cite{Muck2}}$)$ \label{Teo A_pq} If $0 < \alpha < n$, $1 < p < \frac{n}{\alpha}$, $\frac{1}{q} = \frac{1}{p} - \frac{\alpha}{n}$ and $w \in \mathcal{A}_{p,q}$, then
\[
\left(\int_{\mathbb{R}^{n}} [M_{\alpha}f(x)]^{q} w^{q}(x) dx\right)^{1/q} \leq C \left( \int_{\mathbb{R}^{n}} |f(x)|^{p} w^{p}(x) dx \right)^{1/p},
\]
for all $f \in L^{p}_{w^{p}}(\mathbb{R}^{n})$ . 
\end{theorem}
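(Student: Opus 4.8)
The plan is to first reduce to $f \geq 0$, establish the weak-type estimate $M_{\alpha}\colon L^{p}_{w^{p}} \to L^{q,\infty}_{w^{q}}$, and then upgrade it to the strong-type bound by interpolation. A preliminary observation I would record is that $w \in \mathcal{A}_{p,q}$ is equivalent to $w^{q} \in \mathcal{A}_{r}$ with $r = 1 + \frac{q}{p'}$; in particular $w^{q}$ is a doubling measure by Lemma \ref{prop doblante}.

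For the weak-type bound, fix $\lambda > 0$ and set $E_{\lambda} = \{x : M_{\alpha}f(x) > \lambda\}$. For each $x \in E_{\lambda}$ I would choose a ball $B_{x} \ni x$ with $\frac{1}{|B_{x}|^{1-\alpha/n}}\int_{B_{x}} f > \lambda$ and extract from $\{B_{x}\}$ a disjoint subfamily $\{B_{j}\}$ with $E_{\lambda} \subset \bigcup_{j} 5B_{j}$ via the Vitali covering lemma. On each $B_{j}$, H\"older's inequality gives
\[
\lambda\,|B_{j}|^{1-\frac{\alpha}{n}} < \int_{B_{j}} f \leq \left(\int_{B_{j}} f^{p} w^{p}\right)^{1/p}\left(\int_{B_{j}} w^{-p'}\right)^{1/p'}.
\]
The arithmetic identity $\frac{1}{q} + \frac{1}{p'} = 1 - \frac{\alpha}{n}$ turns the $\mathcal{A}_{p,q}$ condition into $\left(\int_{B_{j}} w^{q}\right)^{1/q}\left(\int_{B_{j}} w^{-p'}\right)^{1/p'} \lesssim |B_{j}|^{1-\frac{\alpha}{n}}$, whence $\left(\int_{B_{j}} w^{-p'}\right)^{1/p'} \lesssim |B_{j}|^{1-\frac{\alpha}{n}}\, w^{q}(B_{j})^{-1/q}$. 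Substituting and cancelling $|B_{j}|^{1-\alpha/n}$ yields $\lambda^{q}\, w^{q}(B_{j}) \lesssim \left(\int_{B_{j}} f^{p} w^{p}\right)^{q/p}$. Since $q > p$, the elementary inequality $\sum_{j} a_{j}^{q/p} \leq (\sum_{j} a_{j})^{q/p}$, together with the disjointness of the $B_{j}$ and the doubling of $w^{q}$, gives
\[
\lambda^{q}\, w^{q}(E_{\lambda}) \lesssim \lambda^{q}\sum_{j} w^{q}(B_{j}) \lesssim \left(\sum_{j} \int_{B_{j}} f^{p} w^{p}\right)^{q/p} \leq \|f\|_{L^{p}_{w^{p}}}^{q},
\]
which is the desired weak-type inequality.

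To pass from weak type to strong type, I would use that the $\mathcal{A}_{p,q}$ condition is self-improving: via the reverse H\"older inequality for the $\mathcal{A}_{\infty}$ weights $w^{q}$ and $w^{-p'}$ (the off-diagonal analogue of the openness of the $\mathcal{A}_{p}$ classes recorded above) one finds $\varepsilon > 0$ so that $w \in \mathcal{A}_{p_{0},q_{0}} \cap \mathcal{A}_{p_{1},q_{1}}$ for exponents $1 < p_{0} < p < p_{1} < \frac{n}{\alpha}$ with $\frac{1}{q_{i}} = \frac{1}{p_{i}} - \frac{\alpha}{n}$. The weak-type estimate above then holds at both endpoints, and a Marcinkiewicz interpolation with change of measures (Stein--Weiss) between $L^{p_{0}}_{w^{p_{0}}} \to L^{q_{0},\infty}_{w^{q_{0}}}$ and $L^{p_{1}}_{w^{p_{1}}} \to L^{q_{1},\infty}_{w^{q_{1}}}$ produces the strong-type bound $M_{\alpha}\colon L^{p}_{w^{p}} \to L^{q}_{w^{q}}$, after checking that the interpolated source and target weights are exactly $w^{p}$ and $w^{q}$. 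I expect the main obstacle to be this last step: verifying the self-improvement in the off-diagonal setting so that both straddling exponents remain in the admissible range $(1, n/\alpha)$, and handling the fact that domain and range carry different, $p$-dependent measures, which is precisely why the change-of-measure version of the interpolation theorem is needed rather than the ordinary one.
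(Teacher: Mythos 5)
The paper never proves this statement; it is quoted verbatim from Muckenhoupt--Wheeden (Theorem 3 in \cite{Muck2}), so your attempt has to be judged on its own. Two of your three steps are fine: the weak-type estimate is the standard covering argument and is correct (modulo the routine caveat that the Vitali $5r$-lemma needs balls of bounded radii, so one should run the argument for the truncated operator $M_{\alpha}^{R}$, with bounds uniform in $R$, since under the $\mathcal{A}_{p,q}$ condition the selected balls need not have bounded radii); and the self-improvement $w \in \mathcal{A}_{p_0,q_0}\cap\mathcal{A}_{p_1,q_1}$ with $1<p_0<p<p_1<\frac{n}{\alpha}$ is also correct, via reverse H\"older for $w^{q}$ and $w^{-p'}$.

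The genuine gap is the last step. The Stein--Weiss theorem with change of measures is a Riesz--Thorin type theorem proved by the complex method: its hypotheses are \emph{strong}-type endpoint bounds, so it cannot be fed your weak-type estimates. And the statement you actually need --- weak type $(p_i,q_i)$ with respect to $(w^{p_i}dx,\,w^{q_i}dx)$ at two endpoints implies strong type $(p,q)$ with respect to $(w^{p}dx,\,w^{q}dx)$ in between --- is \emph{false}, even for positive linear operators and even for power weights. Indeed, on $\mathbb{R}$ take $w(x)=|x|$, $I_k=[2^k,2^k+1]$ and $g_0=\sum_{k\geq 1}2^{-k}\chi_{I_k}$. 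For every $s>0$ one has $w^{s}(I_k)\approx 2^{ks}$, and for $2^{-m-1}\leq\lambda<2^{-m}$ the level set $\{g_0>\lambda\}$ is $\bigcup_{k\leq m}I_k$, so
\[
\sup_{\lambda>0}\lambda^{s}\,w^{s}(\{g_0>\lambda\})\approx\sup_m 2^{-ms}\,2^{ms}<\infty,
\qquad\text{while}\qquad
\int g_0^{s}w^{s}\,dx\approx\sum_{k\geq1}1=\infty .
\]
Thus $g_0$ lies in the weak space $L^{s,\infty}_{w^{s}}$ for \emph{every} $s$ but in no strong space $L^{s}_{w^{s}}$. Consequently the rank-one positive linear operator $Tf=\left(\int_{1}^{2}f\,dt\right)g_0$ (note $w\approx 1$ on $[1,2]$, so $|\int_1^2 f|\lesssim\|f\|_{L^{p_j}_{w^{p_j}}}$ for every $p_j$) satisfies your two endpoint weak-type bounds --- in fact weak-type bounds along the whole segment of exponent pairs --- yet $\|Tf\|_{L^{q}_{w^{q}}}=\infty$ whenever $\int_1^2 f\neq 0$. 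So no interpolation theorem whose input is exactly the information you have established can produce the conclusion; the difficulty is not bookkeeping of the interpolated weights (your computation that they come out to $w^{p}$ and $w^{q}$ is right) but that with changing measures the inclusion $L^{q_0,\infty}_{v_0}\cap L^{q_1,\infty}_{v_1}\subset L^{q}_{v_\theta}$, which is what powers Marcinkiewicz in the equal-measure case, simply fails.

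A correct completion must use more structure of $M_{\alpha}$ than its endpoint weak-type bounds. For instance, one can run a Calder\'on--Zygmund decomposition of the level sets of the (dyadic) fractional maximal function to produce a sparse family of cubes, apply H\"older together with the self-improved $\mathcal{A}_{p_0,q_0}$ condition and the reverse H\"older inequality for $w^{q_0}$ on each selected cube, and close the estimate with the ordinary Hardy--Littlewood maximal theorem at the exponent $p/p_0>1$; this is essentially the argument of Muckenhoupt--Wheeden and of its modern ``sparse'' renditions. Your weak-type lemma and self-improvement are genuine ingredients of such a proof, but as written the proposal does not prove the theorem.
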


A weight $w$ satisfies the reverse H\"older inequality with exponent $s > 1$, denoted by $w \in RH_{s}$, if there exists $C> 0$ such that for every ball $B$,
$$\left(\frac{1}{|B|} \int_{B} [w(x)]^{s} dx \right)^{\frac{1}{s}} \leq C \frac{1}{|B|} \int_{B} w(x) dx;$$
the best possible constant is denoted by $[w]_{RH_s}$. We observe that if $w \in RH_s$, then by H\"older's inequality, $w \in RH_t$ for all $1 < t < s$, and
$[w]_{RH_t} \leq [w]_{RH_s}$. Moreover, if $w \in RH_{s}$, $s >1$, then $w \in RH_{s+ \epsilon}$ for some $\epsilon >0$. We denote by $r_w = \sup \{ r >1 : w \in RH_{r} \}$ \textit{the critical index of $w$ for the reverse H\"older condition.}

It is well known that a weight $w$ satisfies the condition $\mathcal{A}_{\infty}$ if and only if $w \in \mathcal{A}_p$ for some $p \geq 1$ (see corollary 7.3.4 in \cite{grafakos}). So $\mathcal{A}_{\infty} = \cup_{1 \leq p < \infty} \mathcal{A}_p$. Also, $w \in \mathcal{A}_{\infty}$ if and only if $w \in RH_{s}$ for some $s>1$ (see Theorem 7.3.3 in \cite{grafakos}). Thus $1 < r_w \leq +\infty$ for all $w \in \mathcal{A}_{\infty}$.

Other remarkable result about the reverse H\"older classes was discovered by Stromberg and Wheeden, they proved in \cite{wheeden} that $w \in RH_{s}$, $1 < s < + \infty$, if and only if  $w^{s} \in \mathcal{A}_{\infty}$.

Given a weight $w$, $0 < p < \infty$ and a measurable set $E$ we set $w^{p}(E) = \int_{E} [w(x)]^{p} dx$. The following result is a immediate consequence of the reverse H\"older condition.

\begin{lemma} \label{ineq RH} For $0 < \alpha < n$, let $0 < p < \frac{n}{\alpha}$ and $\frac{1}{q} = \frac{1}{p} - \frac{\alpha}{n}$. 
If $w^{p} \in RH_{\frac{q}{p}}$, then
\[
[w^{p}(B)]^{-\frac{1}{p}} [w^{q}(B)]^{\frac{1}{q}} \leq [w^{p}]_{RH_{q/p}}^{1/p} |B|^{-\frac{\alpha}{n}},
\]
for each ball $B$ in $\mathbb{R}^{n}$.
\end{lemma}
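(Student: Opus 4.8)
The plan is to unwind the definition of the reverse H\"older condition and translate it directly into the claimed inequality. First I would observe that the relation $\frac{1}{q} = \frac{1}{p} - \frac{\alpha}{n}$ forces $q > p$, so that $\frac{q}{p} > 1$ and the hypothesis $w^{p} \in RH_{q/p}$ is meaningful. The key algebraic identity to exploit is the pointwise equality $(w^{p})^{q/p} = w^{q}$, which lets me write the reverse H\"older inequality for $w^{p}$ with exponent $\frac{q}{p}$ as
\[
\left( \frac{1}{|B|} \int_{B} w^{q} \, dx \right)^{p/q} \leq [w^{p}]_{RH_{q/p}} \, \frac{1}{|B|} \int_{B} w^{p} \, dx,
\]
valid for every ball $B$.

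Next I would rewrite this in terms of the set functions $w^{q}(B)$ and $w^{p}(B)$, obtaining
\[
|B|^{-p/q} \, [w^{q}(B)]^{p/q} \leq [w^{p}]_{RH_{q/p}} \, |B|^{-1} \, w^{p}(B),
\]
and then isolate the factor carrying $w^{q}(B)$:
\[
[w^{q}(B)]^{p/q} \leq [w^{p}]_{RH_{q/p}} \, |B|^{\,p/q - 1} \, w^{p}(B).
\]

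Finally I would raise both sides to the power $\frac{1}{p}$, use the relation $\frac{1}{q} - \frac{1}{p} = -\frac{\alpha}{n}$ to identify the exponent of $|B|$, and multiply through by $[w^{p}(B)]^{-1/p}$ to reach the asserted bound. Since the whole argument is a direct manipulation of the defining inequality, there is no genuine obstacle; the only points deserving a moment of care are the bookkeeping of the exponents (namely the verification that $\frac{1}{p}\bigl(\frac{p}{q} - 1\bigr) = \frac{1}{q} - \frac{1}{p} = -\frac{\alpha}{n}$) and the remark that $w^{p}(B)$ and $w^{q}(B)$ are finite and strictly positive for a fixed ball, which guarantees that all the rearrangements are legitimate.
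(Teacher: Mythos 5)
Your proof is correct and is exactly what the paper has in mind: the paper states this lemma as ``an immediate consequence of the reverse H\"older condition,'' and your computation---applying the $RH_{q/p}$ inequality to $w^{p}$, using $(w^{p})^{q/p}=w^{q}$, and tracking the exponent $\frac{1}{p}\bigl(\frac{p}{q}-1\bigr)=-\frac{\alpha}{n}$---is precisely that unwinding, with the exponent bookkeeping done correctly.
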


\begin{lemma} $($Lemma 4.6 in \cite{Rocha}$)$ \label{cond r_w}
Let  $0 < p < 1$. If $w^{1/p} \in \mathcal{A}_{1}$, then $p \cdot r_{w^{p}} \leq r_{w} \leq r_{w^{p}}.$
\end{lemma}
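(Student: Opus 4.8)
The plan is to reduce everything to the Stromberg--Wheeden equivalence recalled above, namely $u \in RH_s \iff u^s \in \mathcal{A}_\infty$ for $1 < s < \infty$, together with the Remark that $v \in \mathcal{A}_1$ and $0 < r < 1$ force $v^r \in \mathcal{A}_1$. First I would record two preliminary facts. Since $w^{1/p} \in \mathcal{A}_1$ and $0 < p < 1$, the Remark (applied to $v = w^{1/p}$ with $r = p$) gives $w = (w^{1/p})^{p} \in \mathcal{A}_1 \subset \mathcal{A}_\infty$, so that $r_w$ is well defined and $> 1$. More generally, for every exponent $0 < \gamma < 1/p$ the Remark yields $w^{\gamma} = (w^{1/p})^{p\gamma} \in \mathcal{A}_1$, because $0 < p\gamma < 1$; in particular $w^{\gamma} \in \mathcal{A}_\infty$ whenever $0 < \gamma \le 1$. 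This is the ingredient that will cover the ``small exponent'' range, where Stromberg--Wheeden (valid only for exponents $> 1$) cannot be invoked.

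For the inequality $p\,r_{w^p} \le r_w$ I would argue as follows. Let $t > 1$ be any exponent with $w^{p} \in RH_t$. By Stromberg--Wheeden applied to the weight $w^{p}$, this means $(w^{p})^{t} = w^{pt} \in \mathcal{A}_\infty$. If $pt > 1$, a second application of Stromberg--Wheeden (now to $w$ with exponent $pt$) gives $w \in RH_{pt}$, whence $pt \le r_w$ by definition of the critical index; thus $t \le r_w/p$. Taking the supremum over all such $t$ gives $r_{w^p} \le r_w/p$, i.e. $p\,r_{w^p} \le r_w$. (If $pt \le 1$ the constraint $t \le r_w/p$ is automatic, since $t \le 1/p \le r_w/p$, the last step because $r_w > 1$.)

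For the reverse inequality $r_w \le r_{w^p}$ I would fix $t$ with $1 < t < r_w$ and show $w^{p} \in RH_t$, which by Stromberg--Wheeden is equivalent to $w^{pt} \in \mathcal{A}_\infty$. I split into two cases according to the size of $pt$. If $pt > 1$, then since $pt < t < r_w$ we have $w \in RH_{pt}$, hence $w^{pt} \in \mathcal{A}_\infty$. If $pt \le 1$, then the preliminary fact above already gives $w^{pt} \in \mathcal{A}_1 \subset \mathcal{A}_\infty$. In either case $w^{pt} \in \mathcal{A}_\infty$, so $w^{p} \in RH_t$; letting $t \uparrow r_w$ yields $r_{w^p} \ge r_w$.

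The main obstacle is precisely the low-exponent regime $pt \le 1$ in the second inequality: there the Stromberg--Wheeden characterization of $r_w$ says nothing, and one must exploit the stronger hypothesis $w^{1/p} \in \mathcal{A}_1$ (rather than merely $w \in \mathcal{A}_\infty$) through the power-stability of $\mathcal{A}_1$. Apart from this, the only care needed is with the endpoints in the two suprema defining $r_w$ and $r_{w^p}$; tracking them shows that the two inequalities are in fact sharp, giving the identity $r_{w^p} = r_w/p$, of which the stated two-sided bound is a weakened form.
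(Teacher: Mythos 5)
Your proof is correct: both inequalities follow cleanly from the Str\"omberg--Wheeden equivalence $u \in RH_s \Leftrightarrow u^{s} \in \mathcal{A}_{\infty}$ together with the power stability of $\mathcal{A}_1$, with the hypothesis $w^{1/p} \in \mathcal{A}_1$ entering exactly where you say it must, in the low-exponent regime $pt \le 1$. This is essentially the same route as the source: the paper itself does not reproduce a proof (it cites Lemma 4.6 of \cite{Rocha}), but the tools it recalls immediately before stating the lemma --- the Str\"omberg--Wheeden theorem and the remark that $v \in \mathcal{A}_1$, $0<r<1$ imply $v^{r} \in \mathcal{A}_1$ --- are precisely the ones you use.
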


\begin{lemma} $($Lemma 4.7 in \cite{Rocha}$)$ \label{cond r_w_p_q}
Let  $0 < p < q$. If $w^{q} \in \mathcal{A}_{1}$, then $ p \cdot r_{w^{p}} \leq q \cdot r_{w^{q}}$.
\end{lemma}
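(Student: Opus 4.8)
The plan is to translate every reverse-Hölder statement into an $\mathcal{A}_{\infty}$ statement about a power of $w$, using the Stromberg--Wheeden equivalence recalled above (namely $v \in RH_{s} \Leftrightarrow v^{s} \in \mathcal{A}_{\infty}$ for $1 < s < \infty$, see \cite{wheeden}), and then to exploit the hypothesis $w^{q} \in \mathcal{A}_{1}$ to pass back and forth between the two scales $w^{p}$ and $w^{q}$. First I would check that both critical indices are genuinely larger than $1$, so that the asserted inequality is meaningful. Since $w^{q} \in \mathcal{A}_{1} \subseteq \mathcal{A}_{\infty}$, the weight $w^{q}$ lies in $RH_{s}$ for some $s>1$, whence $r_{w^{q}} > 1$. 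Moreover, because $0 < p/q < 1$, the stability of $\mathcal{A}_{1}$ under powers in $(0,1)$ (the Remark following (\ref{A1condequiv})) gives $w^{p} = (w^{q})^{p/q} \in \mathcal{A}_{1} \subseteq \mathcal{A}_{\infty}$, so $r_{w^{p}} > 1$ as well.

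The core of the argument is a comparison carried out for each admissible exponent. I would fix $1 < r < r_{w^{p}}$; then $w^{p} \in RH_{r}$, and Stromberg--Wheeden yields $w^{pr} \in \mathcal{A}_{\infty}$. Now I split into two regimes. If $pr \leq q$, then trivially $pr \leq q < q\, r_{w^{q}}$, since $r_{w^{q}} > 1$. If instead $pr > q$, I rewrite $w^{pr} = (w^{q})^{pr/q}$, where the exponent satisfies $pr/q > 1$; applying Stromberg--Wheeden in the reverse direction to $w^{q}$ then gives $w^{q} \in RH_{pr/q}$, whence $pr/q \leq r_{w^{q}}$, that is $pr \leq q\, r_{w^{q}}$. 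In either case $pr \leq q\, r_{w^{q}}$, and letting $r \uparrow r_{w^{p}}$ produces the claimed bound $p\, r_{w^{p}} \leq q\, r_{w^{q}}$.

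The main obstacle is conceptual rather than computational: a naive comparison of the two critical indices actually suggests the \emph{opposite} inequality, because after the Stromberg--Wheeden translation the set of exponents defining $p\, r_{w^{p}}$ contains the one defining $q\, r_{w^{q}}$, and a larger set has a larger supremum. What rescues the stated direction is precisely the $\mathcal{A}_{1}$ hypothesis, which forces $w^{t} \in \mathcal{A}_{\infty}$ for \emph{all} $0 < t \leq q$ and thereby renders the low-exponent regime $pr \leq q$ harmless; the only substantive content lives in the regime $pr > q$, where the identity $w^{pr} = (w^{q})^{pr/q}$ lets me transfer a reverse-Hölder gain for $w^{p}$ into one for $w^{q}$. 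Care must be taken to invoke the Stromberg--Wheeden equivalence only when the relevant exponent exceeds $1$, which is exactly what the case split guarantees.
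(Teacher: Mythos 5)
Your proof is correct, and it runs on exactly the same engine as the cited argument (Lemma 4.7 in \cite{Rocha}, which this paper quotes without reproducing): the Str\"omberg--Wheeden equivalence $v \in RH_{s} \Leftrightarrow v^{s} \in \mathcal{A}_{\infty}$, applied through the rescaling $w^{pr} = (w^{q})^{pr/q}$, with the hypothesis $w^{q}\in\mathcal{A}_{1}$ disposing of the trivial regime $pr\leq q$ and guaranteeing $r_{w^{p}}, r_{w^{q}}>1$. The case split and the limiting argument $r \uparrow r_{w^{p}}$ are handled correctly, so nothing needs to be added.
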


We conclude these preliminaries with weighted Hardy theory. Topologize $\mathcal{S}(\mathbb{R}^{n})$ by the collection of semi-norms $\| \cdot \|_{\alpha, \beta}$, with $\alpha$ and $\beta$ multi-indices, given by
$$\| \varphi \|_{\alpha, \beta} = \sup_{x \in \mathbb{R}^{n}} |x^{\alpha} \partial^{\beta}\varphi(x)|.$$
For each $N \in \mathbb{N}$, we set $\mathcal{S}_{N}=\left\{  \varphi\in \mathcal{S}(\mathbb{R}^{n}): \| \varphi \|_{\alpha, \beta} \leq 1, |\alpha|, |\beta| \leq N \right\}$. Let $f \in \mathcal{S}'(\mathbb{R}^{n})$, we denote by $\mathcal{M}_{N}$ the grand maximal
operator given by
\[
\mathcal{M}_{N}f(x)=\sup\limits_{t>0}\sup\limits_{\varphi\in\mathcal{S}_{N}
}\left\vert \left(  t^{-n}\varphi(t^{-1} \cdot)\ast f\right)  \left(  x\right)
\right\vert.
\]
Given a weight $w \in \mathcal{A}_{\infty}$ and $p > 0$, the weighted Hardy space $H^{p}_w(\mathbb{R}^{n})$ consists of all tempered distributions $f$ such that
\[
\| f \|_{H^{p}_w(\mathbb{R}^{n})} = \| \mathcal{M}_{N}f \|_{L^{p}_w(\mathbb{R}^{n})} = \left( \int_{\mathbb{R}^{n}}  [\mathcal{M}_{N}f(x)]^{p} w(x) dx \right)^{1/p} < \infty.
\]

Let $\phi \in \mathcal{S}(\mathbb{R}^{n})$ be a function such that $\int \phi(x) dx \neq 0$. For $f \in \mathcal{S}'(\mathbb{R}^{n})$, we define the maximal function $M_{\phi}f$ by
$$M_{\phi}f(x)= \sup_{t>0} \left\vert \left(  t^{-n}\phi(t^{-1} \, \cdot)\ast f\right)  \left(  x\right)\right\vert.$$
For $N$ sufficiently large, we have  $\| M_{\phi}f \|_{L^{p}_w} \simeq \| \mathcal{M}_{N}f \|_{L^{p}_w}$, (see \cite{tor}). The following set
\[
\widehat{\mathcal{D}}_{0} = \{ \phi \in \mathcal{S}(\mathbb{R}^{n}) : \widehat{\phi} \in C_{c}^{\infty}(\mathbb{R}^{n}), \textit{and} \, \supp ( \hat{\phi} ) \subset \mathbb{R}^{n} \setminus B(0, \delta) \,\, \textit{for some} \,\, \delta >0 \}
\]
is dense in $H^{p}_{w}(\mathbb{R}^{n})$, $0 < p  < \infty$, if $w$ is a doubling weight on $\mathbb{R}^{n}$ (see \cite{tor}, Theorem 1, pp. $103$).

We adopt the weighted atomic decomposition developed in \cite{Rocha}. Let $w \in \mathcal{A}_{\infty}$ with critical index $\widetilde{q}_w$ and critical index $r_w$ for reverse H\"older condition. Let $0 < p \leq 1$, $\max \{ 1, p(\frac{r_w}{r_w -1}) \} < p_0 \leq +\infty$,
and $d \in \mathbb{Z}$ such that $d \geq \lfloor n(\frac{\widetilde{q}_w}{p} -1) \rfloor$, a function $a(\cdot)$ is a $w - (p, p_0, d)$ atom centered in $x_0 \in \mathbb{R}^{n}$ if

\

$(a1)$ $a \in L^{p_0}(\mathbb{R}^{n})$ with support in the ball  $B= B(x_0, r)$.

\

$(a2)$ $\| a \|_{L^{p_0}(\mathbb{R}^{n})} \leq |B|^{\frac{1}{p_0}}w(B)^{-\frac{1}{p}}$.

\

$(a3)$ $\int x^{\beta} a(x) dx  =0$ for all multi-index $\beta$ such that $| \beta | \leq d$.

\begin{remark} \label{RH cond}
The condition $\max \{ 1, p(\frac{r_w}{r_w -1}) \} < p_0 < +\infty$ implies that $w \in RH_{(\frac{p_0}{p})'}$. If $r_w = +\infty$, then $w \in RH_{t}$ for each $1 < t < +\infty$. So, if $r_w = + \infty$, it considers $\frac{r_w}{r_w - 1} =1$. 
\end{remark}

The following theorem is crucial to get the main results.

\begin{theorem} $($Theorem 9 in \cite{Rocha}$)$ \label{decomp atomic} Let $f \in \hat{\mathcal{D}}_{0}$, and $0 < p \leq 1$. If $w \in \mathcal{A}_{\infty}$, then there exist a sequence of $w - (p, p_0, d)$ atoms $\{ a_j \}$ and a sequence of scalars $\{ \lambda_j \}$ with $\sum_{j} |\lambda_j |^{p} \leq c \| f \|_{H^{p}_{w}}^{p}$ such that $f = \sum_{j} \lambda_j a_j$, where the convergence is both in $L^{s}(\mathbb{R}^{n})$ and pointwise for each $1 < s <\infty$.
\end{theorem}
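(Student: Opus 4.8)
The plan is to build the atoms by a Calder\'on--Zygmund decomposition of the level sets of the grand maximal function $\mathcal{M}_N f$, adapted to the weight $w$. For each $k \in \mathbb{Z}$ set $\Omega_k = \{x \in \mathbb{R}^{n} : \mathcal{M}_N f(x) > 2^k\}$; these are open (by lower semicontinuity of $\mathcal{M}_N f$) and nested, with $\Omega_k \downarrow$ a null set as $k \to +\infty$ and $\Omega_k \uparrow \mathbb{R}^{n}$ a.e.\ as $k \to -\infty$. I would take a Whitney decomposition $\Omega_k = \bigcup_i B_{k,i}$ into balls of bounded overlap with radii comparable to $\mathrm{dist}(B_{k,i}, \Omega_k^{c})$, fix a smooth partition of unity $\{\eta_{k,i}\}$ subordinate to a fixed dilation of these balls, and correct each localized piece by its $L^{2}(\eta_{k,i}\,dx)$-orthogonal projection $P_{k,i}$ onto polynomials of degree $\le d$, setting the bad functions $b_{k,i} = (f - P_{k,i})\,\eta_{k,i}$ and the good functions $g^{k} = f - \sum_i b_{k,i}$. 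Regrouping the telescoping differences $g^{k+1} - g^{k} = \sum_i b_{k,i} - \sum_i b_{k+1,i}$ in the standard way (with the additional polynomial corrections needed to preserve moments across consecutive levels) produces functions $A_{k,i}$, and the atoms are $a_{k,i} = \lambda_{k,i}^{-1} A_{k,i}$ with $\lambda_{k,i} = C\,2^{k}\, w(B_{k,i})^{1/p}$.

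Next I would verify the three atom axioms and the coefficient estimate. Property $(a1)$ (support in a ball contained in $\Omega_k$) and $(a3)$ (vanishing moments up to order $d$) are built into the construction, the latter coming from the projections $P_{k,i}$. For $(a2)$ the key pointwise bound is $\|A_{k,i}\|_{L^{\infty}} \lesssim 2^{k}$, obtained from the maximal-function control of $f$ on the Whitney balls; converting this $L^{\infty}$ bound into the required $L^{p_0}$ normalization $\|a_{k,i}\|_{L^{p_0}} \le |B_{k,i}|^{1/p_0} w(B_{k,i})^{-1/p}$ uses the reverse H\"older membership $w \in RH_{(p_0/p)'}$ recorded in Remark~\ref{RH cond}, together with the doubling property of Lemma~\ref{prop doblante} to pass from dilated balls back to $B_{k,i}$. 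The coefficient sum is then controlled by a layer-cake comparison: by bounded overlap and doubling, $\sum_{k,i} |\lambda_{k,i}|^{p} \lesssim \sum_k 2^{kp} \sum_i w(B_{k,i}) \lesssim \sum_k 2^{kp} w(\Omega_k) \lesssim \int_{\mathbb{R}^{n}} [\mathcal{M}_N f]^{p} w \,dx = \|f\|_{H^{p}_{w}}^{p}$.

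The main obstacle is establishing that the series converges to $f$ both in $L^{s}$ and pointwise; this is exactly where the hypothesis $f \in \widehat{\mathcal{D}}_{0}$ is indispensable. Because $\widehat{f} \in C_c^{\infty}$ is supported away from the origin, $f$ is Schwartz and all its moments vanish, which forces the rapid decay $\mathcal{M}_N f(x) \lesssim_M (1+|x|)^{-M}$ for every $M$, hence $\mathcal{M}_N f \in L^{s}(\mathbb{R}^{n})$ for all $1 < s < \infty$. The partial sums of the atomic series telescope to $g^{K_2+1} - g^{-K_1}$, so it suffices to prove $g^{k} \to f$ as $k \to +\infty$ and $g^{k} \to 0$ as $k \to -\infty$, in $L^{s}$ and a.e. Since $g^{k} = f$ on $\Omega_k^{c}$ and $|g^{k}| \lesssim 2^{k}$ on $\Omega_k$, I would split $\|g^{k} - f\|_{L^{s}}^{s}$ (resp.\ $\|g^{k}\|_{L^{s}}^{s}$) over $\Omega_k$ and $\Omega_k^{c}$: the contribution of $f$ over the shrinking set $\Omega_k$ (resp.\ $\Omega_k^{c}$) vanishes by absolute continuity of the integral, while the bounded piece is handled by $2^{ks}|\Omega_k| \le \int_{\Omega_k} [\mathcal{M}_N f]^{s} \,dx \to 0$ for $k \to +\infty$, and by the decay estimate $|\{\mathcal{M}_N f > 2^{k}\}| \lesssim_M 2^{-kn/M}$ (a consequence of the vanishing moments) for $k \to -\infty$, choosing $M$ so large that $s > n/M$. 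Pointwise convergence follows along the same lines, since $\Omega_k^{c} \uparrow \mathbb{R}^{n}$ a.e.\ as $k \to +\infty$ and $|g^{k}| \lesssim 2^{k} \to 0$ on $\Omega_k \uparrow \mathbb{R}^{n}$ as $k \to -\infty$. The delicate point throughout is that these estimates must survive the regrouping into atoms; without the rapid decay furnished by $\widehat{\mathcal{D}}_{0}$ one only obtains convergence in $\mathcal{S}'$, so this decay is the crux of the argument.
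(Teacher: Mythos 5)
This theorem is not proved in the paper at all: it is imported verbatim as Theorem 9 of \cite{Rocha}, and the proof given there follows exactly the classical route you outline --- a Whitney--Calder\'on--Zygmund decomposition of the level sets $\Omega_k=\{\mathcal{M}_N f>2^k\}$, polynomial projections to enforce the moment conditions, the bounds $\|A_{k,i}\|_{L^\infty}\lesssim 2^k$ and $\lambda_{k,i}\simeq 2^k w(B_{k,i})^{1/p}$ giving the normalization and the layer-cake coefficient estimate, and the decay of $\mathcal{M}_N f$ for $f\in\widehat{\mathcal{D}}_0$ to upgrade $\mathcal{S}'$-convergence to $L^s$ and a.e.\ convergence. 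Your reconstruction is essentially correct, up to two harmless inaccuracies: the reverse H\"older input is not needed for $(a2)$, since the $L^\infty$ bound makes each $A_{k,i}/\lambda_{k,i}$ a $w\text{-}(p,\infty,d)$ atom and hence automatically a $w\text{-}(p,p_0,d)$ atom via $\|a\|_{L^{p_0}}\leq \|a\|_{L^\infty}|B|^{1/p_0}$ (only the doubling Lemma \ref{prop doblante} is needed, to pass from the dilated support ball back to $B_{k,i}$); and the claimed decay $\mathcal{M}_N f(x)\lesssim_M (1+|x|)^{-M}$ for \emph{every} $M$ is too strong for a fixed $N$ (the attainable order is capped by $N$), though this does not matter because your argument only requires decay of order exceeding $n$.
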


\section{Main results}

In the sequel, given a ball $B = B(x_0,r)$ and $A_1, ..., A_m$ $n \times n$ invertible matrices, we put 
$M= \max \{\| A_j \| : 1 \leq j \leq m \}$, $B_{i}^{\ast} = B(A_i x_0, 2Mr)$ for $i=1, ..., m$ and decompose $\mathbb{R}^{n} = \left( \bigcup_{i=1}^{m} B_{i}^{\ast} \right) \cup R$, where $R= \mathbb{R}^{n} \setminus \left( \bigcup_{i=1}^{m} B_{i}^{\ast} \right)$. Moreover,
$R = \bigcup_{k=1}^{m} R_k$, where
\[
R_{k} = \{ x \in R : |x - A_k x_0| \leq |x - A_i x_0| \,\, for \,\, all \,\, i \neq k \},
\]
for $k=1, ..., m$.

\begin{lemma} \label{estimT}For $0 \leq \alpha < n$ and $m \in \mathbb{N} \cap (1- \frac{\alpha}{n}, +\infty)$, let $T_{\alpha, m}$ be the operator \, defined by 
$(\ref{Talfa})$, where the $A_j$'s are $n \times n$ invertible matrices. If $w \in \mathcal{A}_{\infty}$ and $a(\cdot)$ is a $w-(p, p_0, d)$ atom supported on the ball $B=B(x_0, r)$, then
\[
|T_{\alpha, m}a(x)| \lesssim [w(B)]^{-1/p} \sum_{k=1}^{m} \chi_{R_k}(x) \left( M_{\frac{\alpha n}{n+d+1}}\left( \chi _{B}\right)
(A_{k}^{-1}x)\right) ^{\frac{n+d+1}{n}}, \,\,\,\,\, \text{if} \,\, x \in R.
\]
\end{lemma}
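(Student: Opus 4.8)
The plan is to pit the vanishing moments of the atom against the smoothness of the kernel $K(x,y) = \prod_{j=1}^{m} |x - A_j y|^{-\alpha_j}$ in the $y$-variable. Since $a$ satisfies (a3), the integral $\int P(y)\, a(y)\, dy$ vanishes for every polynomial $P$ of degree $\leq d$, so for each fixed $x$ I may subtract the degree-$d$ Taylor polynomial of $K(x,\cdot)$ at $y=x_0$ and write
\[
T_{\alpha,m}a(x) = \int_B \Big[ K(x,y) - \sum_{|\beta| \leq d} \tfrac{1}{\beta!}\, \partial_y^\beta K(x,x_0)\,(y-x_0)^\beta \Big] a(y)\, dy .
\]
First I would record the geometry valid for $y \in B$ and $x \in R$: since $|A_i(y-x_0)| \leq Mr$ while $|x - A_i x_0| > 2Mr$, one gets $\tfrac12 |x - A_i x_0| \leq |x - A_i y| \leq \tfrac32 |x - A_i x_0|$ for all $i$. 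In particular, along the whole segment $[x_0,y] \subset B$ (using convexity of $B$) the kernel stays away from its singularities $y = A_i^{-1}x$, which justifies the Taylor expansion, and $|x - A_i \xi| \simeq |x - A_i x_0|$ there, reducing all estimates to the distances at the center $x_0$.

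Next I would estimate the derivatives of the product kernel. A derivative $\partial_y^\beta$ of order $|\beta| = d+1$ distributes among the $m$ factors, and each differentiation of $|x - A_j y|^{-\alpha_j}$ produces a gain $\lesssim |x - A_j y|^{-1}$ (with a constant absorbing $\|A_j\|$), so $|\partial_y^\beta K(x,\xi)| \lesssim \sum \prod_j |x - A_j \xi|^{-\alpha_j - n_j}$ over all decompositions $\sum_j n_j = d+1$. On $R_k$ the distance $|x - A_k x_0|$ is the smallest among the $|x - A_i x_0|$, whence $\prod_j |x - A_j x_0|^{-\alpha_j} \leq |x - A_k x_0|^{-(n-\alpha)}$ and $\prod_j |x - A_j x_0|^{-n_j} \leq |x - A_k x_0|^{-(d+1)}$. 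Combining the Taylor remainder with $|y - x_0| < r$, and bounding $\int_B |a| \leq |B|\,[w(B)]^{-1/p}$ by Hölder's inequality together with (a2), I obtain for $x \in R_k$
\[
|T_{\alpha,m}a(x)| \lesssim [w(B)]^{-1/p}\, r^{\,n+d+1}\, |x - A_k x_0|^{-(n - \alpha + d + 1)}.
\]

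Finally I would identify this right-hand side with the fractional maximal term. Setting $z = A_k^{-1}x$ and $\gamma = \tfrac{\alpha n}{n+d+1}$, the facts $|x - A_k x_0| \simeq |z - x_0|$ and $|z - x_0| > 2r$ on $R$ show that the ball $B' = B(z, |z-x_0|+r)$ contains $B$ and has radius comparable to $|x - A_k x_0|$; testing $M_\gamma(\chi_B)$ against $B'$ then yields $M_\gamma(\chi_B)(z) \gtrsim r^n\, |x - A_k x_0|^{\gamma - n}$. Raising to the power $\tfrac{n+d+1}{n}$ and invoking the identity $(\gamma - n)\tfrac{n+d+1}{n} = -(n-\alpha+d+1)$ reproduces exactly the factor $r^{\,n+d+1}|x - A_k x_0|^{-(n-\alpha+d+1)}$, so summing the contributions against $\chi_{R_k}$ gives the claimed bound on $R$. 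I expect the main technical obstacle to be the bookkeeping of the derivative estimate for the product kernel — checking that the worst-case distribution of the $d+1$ derivatives onto the factor realizing the minimal distance produces precisely the exponent $n-\alpha+d+1$ — after which the passage to $M_\gamma$ is a clean matching of powers.
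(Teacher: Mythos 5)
Your proposal is correct and follows essentially the same route as the paper: Taylor expansion of the kernel about $x_0$ against the moment condition (a3), the geometric comparison $|x-A_i\xi|\simeq|x-A_ix_0|$ for $x\in R$, the minimality of $|x-A_kx_0|$ on $R_k$ to collapse the product of distances to the single power $|x-A_kx_0|^{-(n-\alpha+d+1)}$, H\"older plus (a2) to bound $\|a\|_1$, and finally the identification of $r^{n+d+1}|x-A_kx_0|^{-(n-\alpha+d+1)}$ with $\bigl(M_{\frac{\alpha n}{n+d+1}}(\chi_B)(A_k^{-1}x)\bigr)^{\frac{n+d+1}{n}}$. Indeed, your last step spells out in detail the passage to the fractional maximal function that the paper leaves implicit, which is a welcome addition rather than a deviation.
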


\begin{proof} We denote $k(x,y)=\left\vert x-A_{1}y\right\vert ^{-\alpha_{1}} \cdot \cdot \cdot 
\left\vert x-A_{m}y\right\vert ^{-\alpha _{m}}.$ In view of the moment condition of $a(.)$ we have
\[
T_{\alpha, m} a(x)=\int\limits_{B(x_0, r)}k(x,y)a(y)dy=\int\limits_{B(x_0, r)}\left( k(x,y)-q_{d}\left(
x,y\right) \right) a(y)dy, \,\,\,(x \in R),
\]
\newline
where $q_{d}$ is the degree $d$ Taylor polynomial of the function $
y\rightarrow k(x,y)$ expanded around $x_0$. By the standard estimate of the
remainder term of the Taylor expansion, there exists $\xi $ between $y$ and
$x_0$ such that
\[
\left\vert k(x,y)-q_{d}\left( x,y\right) \right\vert \lesssim \left\vert
y-x_0 \right\vert ^{d+1}\sum\limits_{k_{1}+...+k_{n}=d+1}\left\vert \frac{%
\partial ^{d+1}}{\partial y_{1}^{k_{1}}...\partial y_{n}^{k_{n}}}k(x,\xi
)\right\vert
\]
\[
\leq C \left\vert y-x_0 \right\vert ^{d+1}\left(
\prod\limits_{i=1}^{m}\left\vert x-A_{i}\xi \right\vert ^{-\alpha
_{i}}\right) \left( \sum\limits_{l=1}^{m}\left\vert x-A_{l}\xi \right\vert
^{-1}\right) ^{d+1}.
\]
Now, we decompose $R = \bigcup_{k=1}^{m} R_{k}$ where
$$R_{k} = \{ x \in R : |x - A_k x_0| \leq |x - A_i x_0| \,\, for \,\, all \,\, i \neq k \}.$$
If $x \in R$ then $|x - A_i x_0| \geq 2Mr$, since $\xi \in B$ it follows that $|A_i x_0 - A_i \xi | \leq  Mr \leq \frac{1}{2} |x - A_i x_0|$ so
$$|x - A_i \xi| = |x - A_i x_0 + A_i x_0 - A_i \xi| \geq |x - A_i x_0| - |A_i x_0 - A_i \xi| \geq \frac{1}{2} |x - A_i x_0|.$$
If $x \in R$, then $x \in R_{k}$ for some $k$ and since $\alpha_{1}+...+\alpha_{m} = n - \alpha$ we obtain
$$
\left\vert k(x,y)-q_{d}\left( x,y\right) \right\vert  \leq C
\left\vert y-x_0 \right\vert ^{d+1}\left( \prod\limits_{i=1}^{m}\left\vert
x-A_{i}x_0 \right\vert ^{-\alpha _{i}}\right) \left(
\sum\limits_{l=1}^{m}\left\vert x-A_{l}x_0 \right\vert ^{-1}\right) ^{d+1}
$$
$$
\leq C r^{d+1}\left\vert x-A_{k}x_0 \right\vert ^{-n+\alpha -d-1},
$$
this inequality allows us to conclude that
\begin{eqnarray*}
\left\vert T_{\alpha, m}a(x)\right\vert  \leq C\left\Vert a \right\Vert
_{1}r^{d+1}\left\vert x-A_{k}x_0\right\vert ^{-n+\alpha -d-1}\\
\leq C \left\vert B \right\vert ^{1-\frac{1}{p_0}}\left\Vert
a \right\Vert _{p_{0}}r^{d+1}\left\vert x-A_{k}x_0\right\vert ^{-n+\alpha -d-1},
\end{eqnarray*}
since $\|a \|_{p_0} \leq |B|^{1/p_0} [w(B)]^{-1/p}$, we have
\begin{eqnarray} \label{Tmalpha}
\left\vert T_{\alpha, m}a(x)\right\vert  \lesssim \frac{r^{n+d+1} \left\vert x-A_{k}x_0 \right\vert^{-n+\alpha -d-1}}{[w(B)]^{1/p}}
 \end{eqnarray}
\begin{eqnarray*}
\lesssim \frac{\left( M_{\frac{\alpha n}{n+d+1}}\left( \chi _{B}\right)(A_{k}^{-1}x)\right)^{\frac{n+d+1}{n}}}{[w(B)]^{1/p}}, \,\,\,\,
\text{if} \,\, x \in R_{k}.
\end{eqnarray*}
This completes the proof.
\end{proof}

\begin{remark} In Lemma \ref{estimT}, we observe that the matrices $A_j$ can be equal to each other.
\end{remark}

We recall the definition of the critical indices for a weight $w$.

\begin{definition} Given a weight $w$, we denote by $\widetilde{q}_{w} = \inf \{ q>1 : w \in \mathcal{A}_q \}$ \textit{the critical index of $w$}, and we denote by $r_{w} = \sup \{ r > 1 : w \in RH_{r} \}$ \textit{the critical index of $w$  for the reverse H\"older condition.}
\end{definition}

\begin{theorem} \label{Thm 1}
For $\alpha=0$ and $m \geq 2$, let $T_{0, m}$ be the operator defined by $(\ref{Talfa})$ where the $A_j$'s are $n\times n$ invertible matrices such that $A_i - A_j$ are invertible for $i \neq j$, $1 \leq i, j \leq m$. 
If $w \in \mathcal{A}_{\infty}$ and $w(A_j x) \lesssim w(x)$ a.e. $x \in \mathbb{R}^{n}$, $1 \leq j \leq m$, then $T_{0, m}$ can be extended to an $H^{p}_{w}(\mathbb{R}^{n})$-$L^{p}_{w}(\mathbb{R}^{n})$ bounded operator for each $0 < p \leq 1$.
\end{theorem}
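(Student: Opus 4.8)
The plan is to combine the infinite atomic decomposition of Theorem \ref{decomp atomic} with a uniform estimate on atoms, exploiting the fact that for $\alpha=0$ the operator $T_{0,m}$ is a singular integral operator bounded on $L^{s}(\mathbb{R}^{n})$ for each $1 < s < \infty$ (this is where the hypothesis that $A_i - A_j$ is invertible for $i \neq j$ enters). Concretely, I would first reduce the theorem to the single estimate $\| T_{0,m} a \|_{L^{p}_{w}} \lesssim 1$, uniformly over all $w-(p,p_0,d)$ atoms $a$, choosing a finite $p_0$ in the admissible range $\max\{1, p(r_w/(r_w-1))\} < p_0 < \infty$. Granting this, for $f \in \widehat{\mathcal{D}}_{0}$ I would write $f = \sum_j \lambda_j a_j$ with $\sum_j |\lambda_j|^{p} \lesssim \| f \|^{p}_{H^{p}_{w}}$ and convergence in $L^{s}$; since $T_{0,m}$ is $L^{s}$-bounded, $T_{0,m} f = \sum_j \lambda_j T_{0,m} a_j$ in $L^{s}$ and hence a.e., so the $p$-subadditivity of the $L^{p}_{w}$ quasi-norm (valid as $0 < p \leq 1$) gives $\| T_{0,m} f \|^{p}_{L^{p}_{w}} \leq \sum_j |\lambda_j|^{p} \| T_{0,m} a_j \|^{p}_{L^{p}_{w}} \lesssim \sum_j |\lambda_j|^{p} \lesssim \| f \|^{p}_{H^{p}_{w}}$. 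Density of $\widehat{\mathcal{D}}_{0}$ in $H^{p}_{w}$ then yields the bounded extension.

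For the uniform atomic estimate, fix an atom $a$ supported on $B=B(x_0,r)$ and split $\int_{\mathbb{R}^{n}} |T_{0,m} a|^{p} w = \int_{\bigcup_i B_i^{\ast}} + \int_{R}$, with $B_i^{\ast}=B(A_i x_0, 2Mr)$ and $R$ as defined before Lemma \ref{estimT}. On each $B_i^{\ast}$ I would apply H\"older's inequality with exponent $p_0/p > 1$, using the $L^{p_0}$-boundedness of $T_{0,m}$ together with the size condition $(a2)$ to obtain $\| T_{0,m} a \|_{L^{p_0}(B_i^{\ast})} \lesssim |B|^{1/p_0} w(B)^{-1/p}$, and the reverse H\"older property $w \in RH_{(p_0/p)'}$ (Remark \ref{RH cond}) to bound the weight factor by $|B_i^{\ast}|^{-p/p_0} w(B_i^{\ast})$. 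Since $|B_i^{\ast}| \approx |B|$ and, by Lemma \ref{prop A} and hypothesis (\ref{cond A}), $w(B_i^{\ast}) \lesssim w(B)$, these combine to give $\int_{B_i^{\ast}} |T_{0,m} a|^{p} w \lesssim w(B)^{-1} w(B_i^{\ast}) \lesssim 1$; summing over the $m$ balls disposes of the local part.

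For the far part I would invoke Lemma \ref{estimT}, which for $\alpha=0$ (so that $M_{\frac{\alpha n}{n+d+1}} = M$, the Hardy--Littlewood maximal operator) yields $|T_{0,m} a(x)| \lesssim w(B)^{-1/p} \sum_k \chi_{R_k}(x) \bigl( M(\chi_B)(A_k^{-1} x) \bigr)^{(n+d+1)/n}$ on $R$. Raising to the $p$-th power, integrating against $w$, and performing the change of variables $x = A_k u$ converts each term into $\int \bigl( M(\chi_B)(u) \bigr)^{p(n+d+1)/n} w(A_k u)\, du$, and the hypothesis $w(A_k u) \lesssim w(u)$ lets me replace $w(A_k u)$ by $w(u)$. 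The estimate is then closed by the standard weighted bound $\int_{\mathbb{R}^{n}} (M(\chi_B))^{\lambda} w \lesssim w(B)$, valid whenever $\lambda > \widetilde{q}_w$; here $\lambda = p(n+d+1)/n$, and the moment order $d \geq \lfloor n(\widetilde{q}_w/p - 1) \rfloor$ built into the atom is precisely what guarantees $\lambda > \widetilde{q}_w$ (one splits $\mathbb{R}^{n}$ into the dyadic annuli $2^{j+1}B \setminus 2^{j}B$, bounds $M(\chi_B) \approx 2^{-jn}$ there, and sums using the $\mathcal{A}_q$ doubling of $w$).

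I expect the main obstacle to be bookkeeping the interaction between the matrices $A_k$ and the weight: the change of variables $x = A_k u$ distorts both balls and the weight, and it is only the structural hypothesis (\ref{cond A}) — together with Lemma \ref{prop A} controlling $w(B_i^{\ast})$ by $w(B)$ — that restores the comparisons needed in the local and far estimates alike. A secondary delicate point is matching the decay exponent $p(n+d+1)/n$ against the critical index $\widetilde{q}_w$ so that the annular sum converges, which is exactly the role played by the vanishing-moment condition $(a3)$.
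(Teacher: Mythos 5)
Your proposal is correct and follows essentially the same route as the paper: reduce to a uniform $L^{p}_{w}$ bound on $w$-$(p,p_0,d)$ atoms via the $L^{s}$-convergent atomic decomposition, estimate the local part over $\bigcup_i B_i^{\ast}$ by H\"older, the $L^{p_0}$-boundedness of $T_{0,m}$, the reverse H\"older condition $w \in RH_{(p_0/p)'}$ and Lemma \ref{prop A}, and estimate the far part by Lemma \ref{estimT}, the change of variables $x = A_k u$, hypothesis (\ref{cond A}), and the bound $\int (M\chi_B)^{p(n+d+1)/n} w \lesssim w(B)$. The only cosmetic difference is that you justify this last maximal-function estimate by a dyadic-annuli computation, whereas the paper simply invokes Muckenhoupt's theorem (Theorem \ref{cond Ap}) with $w \in \mathcal{A}_{p(n+d+1)/n}$; both are valid.
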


\begin{proof} Theorem 6 in \cite{R-U3} asserts that the operator $T_{0, m}$ is bounded on $L^{p_0}(\mathbb{R}^{n})$ for each 
$1 < p_0 < +\infty$. Given $0 < p \leq 1$, we take $d= \lfloor n (\frac{\widetilde{q}_w}{p} -1 ) \rfloor$ and $p_0 \in (1, +\infty)$ such that $\frac{r_w}{r_w - 1} < p_0$. Thus, by Theorem 2.10 in \cite{Rocha}, it suffices to show that the operator $T_{0, m}$ is uniformly bounded in $L^{p}_{w}$ norm on all $w$-$(p, p_0, d)$ atoms $a(\cdot)$. 

Given a $w$-$(p, p_0, d)$ atom $a(\cdot)$ supported on the ball $B=B(x_0, r)$, let $B^{\ast}_{i} = B(A_i x_0, 2Mr)$. We write
\[
\int_{\mathbb{R}^{n}} |T_{0, m}a(x)|^{p} w(x) dx = \int_{\bigcup_{i=1}^{m} B_{i}^{\ast}} |T_{0, m}a(x)|^{p} w(x) dx 
\]
\[
+ \int_{\mathbb{R}^{n} \setminus (\bigcup_{i=1}^{m} B_{i}^{\ast})} |T_{0, m}a(x)|^{p} w(x) dx = I_1 + I_2
\]
To estimate the term $I_1$, we observe that $p \frac{r_w}{r_w - 1} < p_0$ thus $w \in RH_{(\frac{p_0}{p})'}$ (see Remark \ref{RH cond}), then H\"older's inequality  applied with $\frac{p_0}{p}$, the condition $(a2)$ of the atom $a(\cdot)$, and Lemma \ref{prop A} give
\[
I_1 \lesssim \| T_{0, m} a \|_{p_0}^{p} \sum_{i=1}^{m} \left( \int_{B_{i}^{\ast}} [w(x)]^{(p_0/p)'} \right)^{1/(p_0/p)'}
\]
\[
\lesssim \| a \|_{p_0}^{p} \sum_{i=1}^{m} |B_{i}^{\ast}|^{-p/p_0} w(B_{i}^{\ast}) \leq C.
\]
Taking account that $w(A_j x) \lesssim w(x)$ a.e. $x \in \mathbb{R}^{n}$, $1 \leq j \leq m$, and $w \in \mathcal{A}_{p \frac{n+d+1}{n}}$ ($p \frac{n+d+1}{n} > \widetilde{q}_w$), from Lemma \ref{estimT} with $\alpha =0$ and Theorem \ref{cond Ap}, we get
\[
I_2 \lesssim w(B)^{-1} \sum_{k=1}^{m} \int_{\mathbb{R}^{n}} [M(\chi_B)(x)]^{p\frac{n+d+1}{n}} w(A_k x) dx \leq C.
\]
Thus, $T_{0, m}$ is uniformly bounded in $L^{p}_{w}$ norm on every $w$-$(p, p_0, d)$ atom $a(\cdot)$. 
\end{proof}

\begin{theorem} \label{Ta}
For $0 < \alpha < n$ and $m \geq 1$, let $T_{\alpha, m}$ be the operator defined by $(\ref{Talfa})$ where the $A_j$'s are 
$n\times n$ invertible matrices. If $w^{\frac{n}{(n-\alpha)s}} \in \mathcal{A}_{1}$ with $0 < s < 1$, 
$\frac{r_w}{r_w - 1} < \frac{n}{\alpha}$ and $w(A_j x) \lesssim w(x)$ a.e. $x \in \mathbb{R}^{n}$, $1 \leq j \leq m$, then $T_{\alpha, m}$ can be extended to an $H^{p}_{w^{p}}(\mathbb{R}^{n})$-$L^{q}_{w^{q}}(\mathbb{R}^{n})$ bounded operator for each $s \leq p \leq 1$ and 
$\frac{1}{q} = \frac{1}{p} - \frac{\alpha}{n}$.
\end{theorem}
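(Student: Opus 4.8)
The plan is to follow the template of Theorem \ref{Thm 1}: reduce the claim to a uniform estimate on atoms, and then split each atom's image into a local part and a tail part. First I would record the consequences of the hypotheses. Writing $\sigma=\frac{n}{(n-\alpha)s}$, the assumption $w^{\sigma}\in\mathcal{A}_1$ together with $s\le p\le 1$ and the Remark after (\ref{A1condequiv}) (an $\mathcal{A}_1$ weight raised to a power in $(0,1]$ stays in $\mathcal{A}_1$) yields $w^{p},w^{q},w^{1/p}\in\mathcal{A}_1$, since $p\le 1<\sigma$, $q\le\frac{n}{n-\alpha}<\sigma$ and $\frac1p\le\frac1s\le\sigma$. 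In particular $w^{p}$ and $w^{q}$ are doubling (Lemma \ref{prop doblante}), and $w^{q}\in\mathcal{A}_\infty$ forces $w^{p}\in RH_{q/p}$ by the Stromberg--Wheeden equivalence, so Lemma \ref{ineq RH} is available. Since (\ref{Talfa m}) dominates $T_{\alpha,m}$ pointwise by a finite sum of Riesz potentials composed with the maps $A_j^{-1}$, the operator $T_{\alpha,m}$ is bounded $L^{p_0}(\mathbb{R}^n)\to L^{q_0}(\mathbb{R}^n)$ whenever $1<p_0<\frac{n}{\alpha}$ and $\frac{1}{q_0}=\frac{1}{p_0}-\frac{\alpha}{n}$. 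Exactly as Theorem \ref{Thm 1} rests on Theorem 2.10 of \cite{Rocha}, here I would invoke the corresponding fractional reduction (Theorem 3.4 of \cite{Rocha}): it suffices to bound $\|T_{\alpha,m}a\|_{L^{q}_{w^q}}$ uniformly over all $w^{p}$-$(p,p_0,d)$ atoms, with $d=\lfloor n(\tfrac{\widetilde q_{w^p}}{p}-1)\rfloor$ and $p_0$ fixed below.

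Fix such an atom $a$ supported on $B=B(x_0,r)$ and split $\int_{\mathbb{R}^n}|T_{\alpha,m}a|^{q}w^{q}=I_1+I_2$ over $\bigcup_{i=1}^m B_i^{\ast}$ and over $R=\mathbb{R}^n\setminus\bigcup_i B_i^{\ast}$. For the tail $I_2$ I would use the pointwise bound (\ref{Tmalpha}) of Lemma \ref{estimT}: on $R_k$ one has $|T_{\alpha,m}a(x)|\lesssim[w^p(B)]^{-1/p}r^{n+d+1}|x-A_kx_0|^{-(n+d+1-\alpha)}$. Summing over dyadic annuli centered at $A_kx_0$, using that $w^{q}$ is doubling and Lemma \ref{prop A} (to pass from balls centered at $A_kx_0$ back to $B$), the geometric series converges precisely because $n+d+1>\tfrac{n}{p}$, which is guaranteed by $d\ge\lfloor n(\tfrac{\widetilde q_{w^p}}{p}-1)\rfloor\ge\lfloor n(\tfrac1p-1)\rfloor$. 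This gives $\int_{R_k}|T_{\alpha,m}a|^{q}w^{q}\lesssim[w^p(B)]^{-q/p}r^{\alpha q}\,w^{q}(B)$, and then Lemma \ref{ineq RH} (valid since $w^{p}\in RH_{q/p}$) replaces $w^{q}(B)$ by $r^{-\alpha q}[w^{p}(B)]^{q/p}$, so that $I_2\lesssim 1$.

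For the local part $I_1$ I would apply Hölder with exponent $\frac{q_0}{q}>1$ on each $B_i^{\ast}$, bound $\|T_{\alpha,m}a\|_{L^{q_0}}\lesssim\|a\|_{L^{p_0}}\le|B|^{1/p_0}[w^p(B)]^{-1/p}$ via the $L^{p_0}$-$L^{q_0}$ boundedness and condition $(a2)$, and estimate the weight factor by reverse Hölder: assuming $w^{q}\in RH_{(q_0/q)'}$ one gets $\big(\int_{B_i^\ast}(w^q)^{(q_0/q)'}\big)^{1/(q_0/q)'}\lesssim|B_i^\ast|^{-q/q_0}w^{q}(B_i^\ast)$. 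Since $|B_i^\ast|\approx|B|$, $w^{q}(B_i^\ast)\lesssim w^{q}(B)$ by Lemma \ref{prop A}, and $\frac{1}{p_0}-\frac{1}{q_0}=\frac{\alpha}{n}$, collecting the powers of $|B|$ leaves $I_1\lesssim|B|^{q\alpha/n}[w^p(B)]^{-q/p}w^{q}(B)$, which is again $\lesssim 1$ by Lemma \ref{ineq RH}. Combining the two parts gives $\|T_{\alpha,m}a\|_{L^{q}_{w^q}}\lesssim 1$ uniformly, as required.

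The \emph{crux} is the admissible choice of $p_0$. I need $1<p_0<\frac{n}{\alpha}$ simultaneously with $p_0>p\frac{r_{w^p}}{r_{w^p}-1}$ (so the atoms and the decomposition are legitimate, cf. Remark \ref{RH cond}) and with $(q_0/q)'<r_{w^q}$, i.e. $w^{q}\in RH_{(q_0/q)'}$. Rewriting these in terms of $\tfrac{1}{p_0}$ via $\tfrac{1}{q_0}=\tfrac{1}{p_0}-\tfrac{\alpha}{n}$, they amount to $\frac{\alpha}{n}<\frac{1}{p_0}<\min\{1,\ \frac1p-\frac{1}{p\,r_{w^p}},\ \frac1p-\frac{1}{q\,r_{w^q}}\}$; the second upper bound exceeds $\frac{\alpha}{n}$ iff $r_{w^p}>q/p$, and the third iff $r_{w^q}>1$ (automatic). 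So the whole scheme hinges on $r_{w^p}>q/p=\frac{n}{n-\alpha p}$, and this is exactly where the last hypothesis enters: $\frac{r_w}{r_w-1}<\frac{n}{\alpha}$ is equivalent to $r_w>\frac{n}{n-\alpha}$, and Lemma \ref{cond r_w} (for $p<1$, since $w^{1/p}\in\mathcal{A}_1$; trivially for $p=1$) gives $r_{w^p}\ge r_w>\frac{n}{n-\alpha}\ge\frac{n}{n-\alpha p}=q/p$. Thus an admissible $p_0$ exists, and this index juggling — reconciling the atomic reverse-Hölder constraint, the Sobolev constraint $p_0<n/\alpha$, and the target reverse-Hölder constraint through the critical indices $r_{w^p}$ and $r_{w^q}$ — is the step I expect to be the most delicate.
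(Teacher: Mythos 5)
Your proposal is correct, and its skeleton (reduction to a uniform estimate on $w^{p}$-$(p,p_0,d)$ atoms, then a local/tail splitting over $\bigcup_i B_i^{\ast}$ and its complement) matches the paper's proof; your treatment of the local term $I_1$ and your verification that an admissible $p_0$ exists are essentially the paper's argument, just reorganized (the paper picks $\frac{r_w}{r_w-1}<p_0<\frac{n}{\alpha}$ outright and gets the constraint $w^{q}\in RH_{(q_0/q)'}$ from Lemma \ref{cond r_w_p_q}, whereas you encode all three constraints as an interval for $1/p_0$ and check it is nonempty via Lemma \ref{cond r_w} and $r_{w^{q}}>1$ — equivalent bookkeeping). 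The genuine difference is the tail term. The paper keeps the fractional-maximal form of Lemma \ref{estimT}, transfers the weight by $w(A_kx)\lesssim w(x)$, and then invokes the Muckenhoupt--Wheeden inequality (Theorem \ref{Teo A_pq}) with exponents $\widetilde p,\widetilde q$ and the $\mathcal{A}_{\widetilde p,\widetilde q}$ condition supplied by Remark \ref{w_1_q}, which yields $J_2\lesssim (w^{p}(B))^{-q/p}(w^{p}(B))^{q/p}$ directly. You instead use the raw decay estimate (\ref{Tmalpha}) and sum dyadic annuli, using the doubling of $w^{q}$ (Lemma \ref{prop doblante}), Lemma \ref{prop A}, the convergence condition $n+d+1>n/p$ forced by the moment condition, and finally Lemma \ref{ineq RH} (justified, as you note, by Str\"omberg--Wheeden: $w^{q}\in\mathcal{A}_\infty$ iff $w^{p}\in RH_{q/p}$). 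Your route is more elementary and self-contained — it never needs the weighted $M_\alpha$ inequality or $\mathcal{A}_{p,q}$ classes for the tail — at the cost of carrying the doubling-exponent and geometric-series computation by hand; the paper's route is shorter because that computation is outsourced to Theorem \ref{Teo A_pq}. Both hinge on the same two hypotheses in the same places: $w(A_jx)\lesssim w(x)$ to return to balls centered at $x_0$, and $\frac{r_w}{r_w-1}<\frac{n}{\alpha}$ to make the index constraints compatible.
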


\begin{proof} For $0 < p \leq 1$ and $\frac{1}{q} = \frac{1}{p} - \frac{\alpha}{n}$, we have that $p < q \leq \frac{n}{n - \alpha}$. Now,
the condition $w^{n/(n - \alpha)s} \in \mathcal{A}_1$, $0 < s < 1 < \frac{n}{n - \alpha}$, implies that 
$w$, $w^{1/p}$, $w^{p}$ and $w^{q}$ belong to $\mathcal{A}_1$, for all $s \leq p \leq 1$ and $\frac{1}{q} = \frac{1}{p} - \frac{\alpha}{n}$.

We also have that $\widetilde{q}_{w^{p}} =1$, since $w^{p} \in \mathcal{A}_1$.
We choose $d= \lfloor n (\frac{1}{p} - 1) \rfloor$ and take $p_0$ such that $\frac{r_w}{r_w - 1} < p_0 < \frac{n}{\alpha}$, from Lemma \ref{cond r_w}, we have that 
$p \frac{r_{w^{p}}}{r_{w^{p}} -1} \leq \frac{r_w}{r_w -1} < p_0$ so $\max\{1, p (\frac{r_{w^{p}}}{r_{w^{p}} - 1}) \} < p_0$ (this last inequality is required in the definition of $w^{p}$-atom). By Theorem \ref{decomp atomic}, given $f \in \widehat{\mathcal{D}}_{0}$ we can write $f = \sum \lambda_j a_j$ where the $a_j$'s are 
$w^{p} - (p, p_0, d)$ atoms, the scalars $\lambda_j$ satisfies $\sum_{j} |\lambda_j |^{p} \lesssim \| f \|_{H^{p}_{w^{p}}}^{p}$  and the series converges in $L^{p_0}(\mathbb{R}^{n})$. For $\frac{1}{q_0} = \frac{1}{p_0} - \frac{\alpha}{n}$, $T_{\alpha, m}$ is a bounded operator from $L^{p_0}(\mathbb{R}^{n})$ into $L^{q_0}(\mathbb{R}^{n})$, this follows from the pointwise estimate that appears in (\ref{Talfa m}). Since $f = \sum_j \lambda_j a_j$ in $L^{p_0}(\mathbb{R}^{n})$, we have that
\begin{equation}
|T_{\alpha, m}f(x)| \leq \sum_{j} |\lambda_j| |T_{\alpha, m}a_j(x)|, \,\,\,\,\, \textit{a.e.} \, x \in \mathbb{R}^{n}. \label{puntual}
\end{equation}
If $\|T_{\alpha, m} a_j \|_{L^{q}_{w^{q}}} \leq C$, with $C$ independent of the $ w^{p} - (p, p_0, d)$ atom $a_j(\cdot)$, then (\ref{puntual}) allows us to obtain
\[
\|T_{\alpha, m} f \|_{L^{q}_{w^{q}}} \leq C \left( \sum_{j} |\lambda_j|^{\min\{1, q \}} \right)^{\frac{1}{\min\{1, q \}}} \leq C \left( \sum_{j} |\lambda_j |^{p} \right)^{1/p} \lesssim \| f \|_{H^{p}_{w^{p}}},
\] 
for all $f \in \widehat{\mathcal{D}}_0$, so the theorem follows from the density of  $\widehat{\mathcal{D}}_0$ in $H^{p}_{w^{p}}(\mathbb{R}^{n})$.

\

To conclude the proof we will prove that there exists an universal constant $C > 0$ such that 
\begin{equation} \label{uniform estimate}
\|T_{\alpha, m} a \|_{L^{q}_{w^{q}}} \leq C, \,\,\,\, \textit{for all} \,\, w^{p} - (p, p_0, d) \,\, \textit{atom} \,\, a(\cdot). 
\end{equation}
To prove (\ref{uniform estimate}), let $B_{i}^{\ast}=B(A_i x_0, 2Mr)$ for $i=1, ..., m$, where $B=B(x_0, r)$ is the ball containing the support of the atom $a(\cdot)$. So
\[
\int_{\mathbb{R}^{n}} |T_{\alpha, m}a(x)|^{q} w^{q}(x) dx = \int_{\bigcup_{i=1}^{m} B_{i}^{\ast}} |T_{\alpha, m}a(x)|^{q} w^{q}(x) dx
\]
\[  
+ \int_{\mathbb{R}^{n} \setminus (\bigcup_{i=1}^{m} B_{i}^{\ast})} |T_{\alpha, m}a(x)|^{q} w^{q}(x) dx = J_1 + J_2
\]
To estimate $J_1$, we apply H\"older's inequality with $\frac{q_0}{q}$ and use that $w^{q} \in RH_{(\frac{q_0}{q})'}$ ($p_0 > p \frac{r_{w^{p}}}{r_{w^{p}} - 1}$ and Lemma \ref{cond r_w_p_q} imply that $q_0 > q \frac{r_{w^{q}}}{r_{w^{q}}-1})$, thus
\[
J_1 \lesssim \|T_{\alpha, m} a \|_{L^{q_0}}^{q} \sum_{i=1}^{m} \left( \int_{B_{i}^{\ast}} [w^{q}(x)]^{(\frac{q_0}{q})'} dx \right)^{1/(\frac{q_0}{q})'}
\]
\[
\lesssim |B|^{q/p_0} (w^{p}(B))^{-q/p} \sum_{i=1}^{m}|B_{i}^{\ast}|^{-q/ q_0} w^{q}(B_{i}^{\ast})
\]
Lemma \ref{prop A} implies
\[
\lesssim |B|^{q\alpha/n}  (w^{p}(B))^{-q/p} w^{q}(B).
\]
Then, Lemma \ref{ineq RH} gives
\begin{equation}
J_1 \leq C. \label{Talfa 2}
\end{equation}

Now we estimate $J_2$. By Lemma \ref{estimT}, considering there $d= \lfloor n(\frac{1}{p}-1) \rfloor$ and $w^{p}$ instead of $w$, we have
\[
|T_{\alpha, m}a(x)|^{q} \lesssim (w^{p}(B))^{-q/p} \sum_{k=1}^{m} \left[ M_{\frac{\alpha n}{n+d+1}}(\chi_{B}) (A_{k}^{-1}x) \right]^{q\frac{n+d+1}{n}}, \,\, \textit{for all} \,\, x \notin \bigcup_{i=1}^{m} B_{i}^{\ast},
\]
by integrating this pointwise estimate over $\mathbb{R}^{n} \setminus (\bigcup_{i=1}^{m} B_{i}^{\ast})$ with respect to $w^{q}(x) dx$ and since $w(A_j x) \lesssim w(x)$ a.e. $x \in \mathbb{R}^{n}$, $1 \leq j \leq m$, we obtain that
\begin{equation} \label{Talfa 3}
J_2 \lesssim (w^{p}(B))^{-q/p} \int_{\mathbb{R}^{n}} \left[ M_{\frac{\alpha n}{n+d+1}}(\chi_{B}) (x) \right]^{q\frac{n+d+1}{n}} w^{q}(x) dx. 
\end{equation}
Being  $d= \lfloor n(\frac{1}{p}-1) \rfloor$, we have $q \frac{n+d+1}{n} > p \frac{n+d+1}{n} > 1$. We write $\widetilde{q} = q \frac{n+d+1}{n}$ and let $\frac{1}{\widetilde{p}} = \frac{1}{\widetilde{q}} + \frac{\alpha}{n+d+1}$, so $\frac{\widetilde{p}}{\widetilde{q}} = \frac{p}{q}$ and
$w^{q/{\widetilde{q}}} \in \mathcal{A}_{\widetilde{p}, \widetilde{q}}$ (see Remark \ref{w_1_q}). From Theorem \ref{Teo A_pq} we obtain
\[
\int_{\mathbb{R}^{n}} \left[ M_{\frac{\alpha n}{n+d+1}}(\chi_{B}) (x) \right]^{q\frac{n+d+1}{n}} w^{q}(x) dx \lesssim \left( \int_{\mathbb{R}^{n}} \chi_{B}(x) w^{p}(x) dx \right)^{q/p} =  (w^{p}(B))^{q/p}.
\]
This inequality and (\ref{Talfa 3}) give
\begin{equation}
J_2 \leq C. \label{Talfa 4}
\end{equation}
Finally, (\ref{Talfa 2}) and (\ref{Talfa 4}) allow us to obtain (\ref{uniform estimate}). This completes the proof.
\end{proof}

In the following corollary we recover Theorem 4.10 obtained in \cite{Rocha}.

\begin{corollary} For $0 < \alpha < n$, let $I_{\alpha}$ be the Riesz potential on $\mathbb{R}^{n}$. If $w^{\frac{n}{(n-\alpha)s}} \in \mathcal{A}_{1}$ 
with $0 < s < 1$, and $\frac{r_w}{r_w - 1} < \frac{n}{\alpha}$, then $I_{\alpha}$ can be extended to an 
$H^{p}_{w^{p}}(\mathbb{R}^{n})$-$L^{q}_{w^{q}}(\mathbb{R}^{n})$ bounded operator for each $s \leq p \leq 1$ and 
$\frac{1}{q} = \frac{1}{p} - \frac{\alpha}{n}$.
\end{corollary}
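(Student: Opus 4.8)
The plan is to recognize that this corollary is merely a specialization of Theorem \ref{Ta}, since the Riesz potential $I_{\alpha}$ is exactly the operator $T_{\alpha, m}$ in the case $m = 1$ with $A_1 = \mathrm{id}$, as already noted in the introduction. Concretely, setting $m = 1$ and $A_1 = \mathrm{id}$ in the definition \eqref{Talfa} leaves the single exponent $\alpha_1 = n - \alpha$, so that $T_{\alpha, 1}f(x) = \int_{\mathbb{R}^{n}} |x - y|^{-(n-\alpha)} f(y)\, dy = I_{\alpha}f(x)$. For $m = 1$ the auxiliary invertibility requirement on the differences $A_i - A_j$ is vacuous, and in any event it is imposed only in the case $\alpha = 0$, which is excluded here since $0 < \alpha < n$.

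Next I would verify that the standing hypotheses of Theorem \ref{Ta} are met under the assumptions of the corollary. The conditions $w^{\frac{n}{(n-\alpha)s}} \in \mathcal{A}_1$ with $0 < s < 1$ and $\frac{r_w}{r_w - 1} < \frac{n}{\alpha}$ are word for word those of Theorem \ref{Ta}. The only remaining hypothesis to check is condition \eqref{cond A}, that is $w(A_j x) \lesssim w(x)$ almost everywhere for each $j$; but with the single matrix $A_1 = \mathrm{id}$ one has $w(A_1 x) = w(x)$, so \eqref{cond A} holds trivially with constant $C = 1$. Thus every assumption required by Theorem \ref{Ta} is satisfied.

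Applying Theorem \ref{Ta} with $m = 1$ and $A_1 = \mathrm{id}$ then yields at once that $I_{\alpha} = T_{\alpha, 1}$ extends to an $H^{p}_{w^{p}}(\mathbb{R}^{n})$-$L^{q}_{w^{q}}(\mathbb{R}^{n})$ bounded operator for each $s \leq p \leq 1$ and $\frac{1}{q} = \frac{1}{p} - \frac{\alpha}{n}$, which is precisely the asserted statement. There is no real obstacle to overcome: the entire analytic content lives in Theorem \ref{Ta}, and this corollary only records that the identity-matrix choice recovers the classical Riesz potential while rendering condition \eqref{cond A} an equality.
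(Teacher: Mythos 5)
Your proposal is correct and takes essentially the same route as the paper, which also proves the corollary simply by applying Theorem \ref{Ta} with identity matrices (the paper keeps general $m \geq 1$ and sets $A_j = \mathrm{id}$ for all $j$, whereas you take $m=1$; this difference is immaterial since both choices make $T_{\alpha,m} = I_{\alpha}$ and render condition (\ref{cond A}) trivial). Your additional verifications — that $\alpha_1 = n-\alpha$, that the $A_i - A_j$ invertibility condition is irrelevant for $0 < \alpha < n$, and that the remaining hypotheses match word for word — are exactly the (unstated) checks behind the paper's one-line proof.
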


\begin{proof} To apply Theorem \ref{Ta} with $A_j = id$ for all $j= 1, ..., m$.
\end{proof}

\


\begin{thebibliography}{99}

\bibitem{bownik} M. Bownik, Boundedness of operators on Hardy spaces via atomic decompositions. \textit{Proc. Amer. Math. Soc.}, 133, 3535-3542, (2005).

\bibitem{cuerva} J. Garc\'{\i}a-Cuerva, Weighted $H^{p}$ spaces, \textit{Dissertations Math.}, 162, 1-63, (1979).

\bibitem{Godoy} T. Godoy and M. Urciuolo, \textit{About the $L^{p}$-boundedness of some integral operators}, Rev. de la Un. Mat. Arg., 38 
(3), 192-195, (1993).

\bibitem{Urciuolo} T. Godoy and M. Urciuolo, \textit{On certain integral operators of fractional type}, Acta Math. Hungar. 82,
99-105, (1999).

\bibitem{grafakos} L. Grafakos, Classical Fourier Analysis, 3rd edition, Graduate Texts in Mathematics, 249, Springer New York, (2014).

\bibitem{Krantz} S. Krantz, \textit{Fractional integration on Hardy spaces}, Studia Mathematica, vol 73 (2), 87-94, (1982).

\bibitem{lee} M. -Y. Lee, C. -C. Lin, The molecular characterization of weighted  Hardy spaces, \textit{Journal of Funct. Analysis,} 188, 442 - 460, (2002).

\bibitem{li} X. Li and L. Peng, The molecular characterization of weighted Hardy spaces, \textit{Science in China} (Series A), vol. 44 (2), 201-211, (2001).

\bibitem{Muck} B. Muckenhoupt, Weighted norm inequalities for the Hardy maximal function, \textit{Trans. of the Amer. Math. Soc.}, Vol 165, 207-226, (1972).

\bibitem{Muck2} B. Muckenhoupt and R. L. Wheeden, Weighted norm inequalities for fractional integrals, \textit{Trans. of the Amer. Math. Soc.}, Vol 192, 261-274, (1974).

\bibitem{Ricci} F. Ricci and P. Sj\"ogren, \textit{Two-parameter maximal functions in the Heisenberg group}, Math. Z. 199 (4), 565-575, 
(1998).

\bibitem{Riveros} M. S. Riveros and M. Urciuolo, \textit{Weighted inequalities for integral operators with some homogeneous kernels},
Czechoslovak Math. J., 55 (130), 423-432, (2005).

\bibitem{ro} P. Rocha, A note on Hardy spaces and bounded linear operators, \textit{Georgian Math. J.}, 25(1), 73-76, (2018).

\bibitem{ro2} P. Rocha, \textit{A remark on certain integral operators of fractional type}, Canad. Math. Bull., Vol. 61 (2), 370-375, (2018).

\bibitem{ro3} P. Rocha, \textit{Boundedness of generalized Riesz potentials on the variable Hardy spaces}, Journal of the Australian Math. Soc. Vol 104, Issue 2, 255-273, (2018).

\bibitem{Rocha} P. Rocha, \textit{On the atomic and molecular decomposition of weighted Hardy spaces}, Rev. de la Un. Mat. Arg., 61 (2), 
229-247, (2020).

\bibitem{R-U} P. Rocha and M. Urciuolo, \textit{On the $H^{p}-L^{p}$ boundedness of  some integral operators}, Georgian Math. Journal, vol 18 (4), 801-808, (2011).

\bibitem{R-U2} P. Rocha and M. Urciuolo, \textit{On the $H^{p}-L^{q}$ boundedness of  some fractional  integral operators}, Czech. Math. Journal, 62 (137), 625-635, (2012).

\bibitem {rochur} P. Rocha and M. Urciuolo, \textit{Fractional type integral operators on variable Hardy spaces}, Acta Math. Hungar., 143 (2), 502-514, (2014).

\bibitem{R-U3} P. Rocha and M. Urciuolo, \textit{Fractional type integral operators of variable order}, Rev. de la Un. Mat. Arg., 58 (2), 
281-296, (2017).

\bibitem{wheeden}  J. O. Str\"omberg and R. L. Wheeden, Fractional integrals on weighted $H^{p}$ and $L^{p}$ spaces, \textit{Trans. Amer. Math. Soc.}, 287, 293-321, (1985).

\bibitem{tor} J. O. Str\"omberg and A. Torchinsky, Weighted Hardy spaces, Lecture Notes in Mathematics, vol 131, Springer-Verlag, Berl\'{\i}n, (1989).

\bibitem{Taibleson} M. H. Taibleson and G. Weiss, \textit{The molecular characterization of certain Hardy spaces}, Asterisque 77, 67-149, 
(1980).

\bibitem{Tan} J. Tan and J. Zhao, \textit{Fractional Integrals on Variable Hardy–Morrey Spaces}, Acta Math. Hungar., 148 (1), 174-190, (2016).

\bibitem{dachun} D. Yang and Y. Zhou, A boundedness Criterian via atoms for linear operators in Hardy spaces, \textit{Constr. Approx.}, 29, 207-218, (2009).


\end{thebibliography}
\end{document}